\setlist[itemize]{topsep=0pt,partopsep=0pt,itemsep=0pt,parsep=0pt}
\setlist[itemize,1]{label={\small\textbullet}}
\setlist[itemize,2]{label={\tiny\textbullet}}
\setlist[itemize,3]{label=$\cdot$}
\setlist[enumerate]{topsep=0pt,partopsep=0pt,itemsep=0pt,parsep=0pt}
\setlist[enumerate,1]{label=\roman*)}
\setlist[enumerate,2]{label=\alph*)}
\setlist[enumerate,3]{label=\arabic*)}
\title{Matching Theory and Barnette's Conjecture}
\date{}
\DeclareRobustCommand{\authorthing}{
	\begin{center}
		Maximilian Gorsky -- Technische Universit\"at Berlin\\
		\href{mailto:m.gorsky@pm.me}{m.gorsky@pm.me}\\
        Raphael Steiner\thanks{Supported by an ETH Postdoctoral Fellowship.} -- ETH Z\"urich\\
        \href{raphaelmario.steiner@inf.ethz.ch}{raphaelmario.steiner@inf.ethz.ch}\\
		Sebastian Wiederrecht\thanks{Supported by the ANR project ESIGMA (ANR-17-CE23-0010).} -- LIRMM, University of Montpellier\\
		\href{mailto:sebastian.wiederrecht@gmail.com}{sebastian.wiederrecht@gmail.com}
\end{center}}
\author{\authorthing}
\begin{document}
\maketitle

\begin{abstract}
Barnette's Conjecture claims that all cubic, 3-connected, planar, bipartite graphs are Hamiltonian.
We give a translation of this conjecture into the matching-theoretic setting.
This allows us to relax the requirement of planarity to give the equivalent conjecture that all cubic, 3-connected, Pfaffian, bipartite graphs are Hamiltonian.

A graph, other than the path of length three, is a brace if it is bipartite and any two disjoint edges are part of a perfect matching.
Our perspective allows us to observe that Barnette's Conjecture can be reduced to cubic, planar braces.
We show a similar reduction to braces for cubic, 3-connected, bipartite graphs regarding four stronger versions of Hamiltonicity.
Note that in these cases we do not need planarity.

As a practical application of these results, we provide some supplements to a generation procedure for cubic, 3-connected, planar, bipartite graphs discovered by Holton et al. [\emph{Hamiltonian Cycles in Cubic 3-Connected Bipartite Planar Graphs, JCTB, 1985}].
These allow us to check whether a graph we generated is a brace.
% These additions require a bit more space per generated graph (at most three sevenths of the number of vertices of the graph), but only contribute linearly to the computational effort of the generation procedure otherwise.
\end{abstract}

\section{Introduction}\label{sec:intro}

% !TeX spellcheck = en_GB

In 1884, Tait conjectured that every cubic, 3-connected, planar graph is Hamiltonian \cite{tait1884listing}, which Tutte disproved in 1946 \cite{tutte1946hamiltonian}.
Subsequently, Tutte conjectured that the result might hold if the graph was cubic, 3-connected, and bipartite \cite{tutte1971factors}, which was disproven by Horton \cite{bondy1976graph}.
Since all known counterexamples to Tait's conjecture are non-bipartite and all known counterexamples to Tutte's conjecture are non-planar, Barnette's conjecture from 1969 seems like a sensible compromise.
Let $\Barn$ be the class of all cubic, 3-connected, planar, bipartite graphs.

\begin{conjecture}[Barnette \cite{barnette1969conjecture}]\label{con:barnette}
    All graphs in $\Barn$ are Hamiltonian.
\end{conjecture}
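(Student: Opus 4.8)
The honest starting point is that Barnette's Conjecture has been open since 1969, so what follows is a plan of attack rather than a proof, and it mirrors the reduction strategy the abstract outlines. The first step is to translate Hamiltonicity of a cubic graph $G$ into the language of perfect matchings. Since $G$ is cubic, a perfect matching $M$ has the property that $G - M$ is $2$-regular, hence a disjoint union of (even, as $G$ is bipartite) cycles; thus $G$ is Hamiltonian if and only if $G$ admits a perfect matching $M$ for which $G - M$ is a single spanning cycle, equivalently a proper $3$-edge-colouring in which the union of two of the colour classes is connected. I would therefore restate ``$G$ is Hamiltonian'' purely in terms of the perfect-matching structure of $G$, so that from this point on planarity enters only through how it constrains that structure.

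The second step is the reduction to braces. Because $G$ is $3$-connected and bipartite (hence matching covered), it has a tight cut decomposition whose pieces are braces, i.e.\ the path of length three or graphs in which any two disjoint edges lie in a common perfect matching. I would argue that a counterexample in $\Barn$ of minimum order is itself a brace: if not, split $G$ along a nontrivial tight cut, perform the appropriate cubic completion of each of the two sides so that both resulting graphs again lie in $\Barn$, find Hamiltonian cycles there by minimality, and show these cycles recombine across the cut into a Hamiltonian cycle of $G$ (the matching-theoretic viewpoint is exactly what makes the gluing behave well). The same argument, applied to stronger forms of Hamiltonicity where the recombination is even cleaner, should not use planarity at all, giving the brace reductions for those variants.

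The third step is to trade planarity for the Pfaffian property. Planar graphs are Pfaffian, and for bipartite graphs being Pfaffian is precisely the condition making the perfect-matching structure behave ``planar-like'' by the structural characterisation of Pfaffian bipartite graphs. Since every step above touches planarity only via perfect matchings, I expect each to survive verbatim with ``planar'' replaced by ``Pfaffian'', yielding the equivalent conjecture that all cubic, $3$-connected, Pfaffian, bipartite graphs are Hamiltonian. Finally, to make the reduction usable with the generation procedure of Holton et al., one tracks at each generation step which side of a tight cut is being realised and checks the ``two disjoint edges in a common perfect matching'' condition directly, so that one can certify whether each generated graph is a brace.

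The main obstacle is the last mile, and it is fatal to ``finishing'' the conjecture here: even after reducing to cubic planar (or Pfaffian) braces, braceness by itself does not hand us a Hamiltonian cycle, and producing one from the perfect-matching structure of a brace is exactly the combinatorial core that remains unresolved. What the translation genuinely buys is organisational — it removes the planarity crutch, isolates braces as the only case that matters, and recasts the problem in a setting with a rich toolkit — but any actual proof would still have to do new work precisely at the point where a single spanning cycle must be forced.
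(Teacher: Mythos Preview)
You correctly identify that the statement is an open conjecture and that the paper does not prove it; your outline is a reduction strategy, and it broadly matches what the paper carries out. There is, however, one genuine gap in your second step that the paper handles differently and that matters.

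You propose to split a minimum counterexample along a nontrivial tight cut, find Hamiltonian cycles in the two smaller pieces by minimality, and recombine them across the cut. This recombination fails for plain Hamiltonicity: a Hamiltonian cycle in each tight cut contraction uses exactly two of the three cut edges at the contraction vertex, and nothing forces the two sides to select the \emph{same} pair. The paper exhibits an explicit witness in its discussion section (the Horton graph): a cubic $3$-connected bipartite graph whose tight cut contractions are all Hamiltonian while the graph itself is not. What makes the gluing succeed is a stronger property such as $P_3$-Hamiltonicity or the $H^-$-property, which lets one prescribe which two cut edges the cycle uses. The paper therefore does not obtain the brace reduction by direct gluing; it first invokes the Kelmans and Hertel equivalences (Barnette's conjecture is equivalent to every graph in $\Barn$ having the $H^-$-property, and to every graph in $\Barn$ being $P_4$-Hamiltonian), proves the tight cut gluing for those stronger properties---where your remark that ``the recombination is even cleaner'' is correct---and only then closes the loop back to plain Hamiltonicity. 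So your high-level plan is the paper's, but the detour through a strengthened Hamiltonicity notion is a necessity, not a convenience, and your sentence ``the matching-theoretic viewpoint is exactly what makes the gluing behave well'' is precisely where the argument would break.
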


This conjecture has received considerable attention and is known to be true for some subclasses of $\Barn$, but an approach to the problem in general continues to be elusive.
See Goodey's \cite{goodey1975hamiltonian} paper from 1975 for the historically most significant example of a partial solution and see the work of Bagheri et al.\ \cite{bagheri2021hamiltonian} for a current partial solution which generalises Goodey's result.
A related conjecture, also due to Barnette, which asserts that all cubic, 3-connected, planar graphs in which every face has size at most six, was recently resolved by Kardo{\v{s}} \cite{kardos2020computer}.
Many parts of the proof make heavy use of the constraint on the size of the faces, which leaves us with no direct way to translate the methods used there.
Significant effort has also been devoted to finding strengthenings of \Cref{con:barnette}; that is statements which also concern $\Barn$, but demand more than a simple Hamiltonian cycle.
For examples of strengthenings see \cite{hertel2005survey} for an overview.

Using the perspective of Matching Theory, we formulate a result reducing the \Cref{con:barnette} to a subset of $\Barn$.
In a graph $G$, we call a set of mutually disjoint edges a \emph{matching} and we call it a \emph{perfect matching} if the unions of the edges in the set contain all vertices of $G$.
A connected graph $G$ with $|\V{G}| \geq 2k+2$ is called \emph{$k$-extendable} if for every matching $F \subseteq \E{G}$ with $|F| \leq k$ there exists a perfect matching $M$ with $F \subseteq M$.
If $G$ is isomorphic to $C_4$ or a bipartite, 2-extendable graph, it is called a \emph{brace}.

\begin{lemma}\label{lem:bracehamil}
    \Cref{con:barnette} holds if and only if every cubic, planar brace is Hamiltonian.
\end{lemma}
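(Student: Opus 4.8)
The claim is an equivalence: Barnette's Conjecture (all graphs in $\Barn$ are Hamiltonian) holds iff every cubic, planar brace is Hamiltonian. The "only if" direction is nearly immediate: every cubic planar brace is, by definition, bipartite, and 2-extendability implies 3-connectedness (a standard matching-theory fact: $k$-extendable graphs on enough vertices are $(k+1)$-connected), so a cubic planar brace lies in $\Barn$, and Barnette's Conjecture would apply directly. Actually I should double-check the case $C_4$ — but $C_4$ is not cubic, so among cubic planar braces we only deal with genuine 2-extendable bipartite graphs, and these are 3-connected. Hence cubic planar braces form a subclass of $\Barn$, giving "only if" for free.

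**The substantive direction: "if" — reducing $\Barn$ to cubic planar braces.**

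Here I would argue by induction (or minimal counterexample) on, say, the number of vertices. Suppose every cubic planar brace is Hamiltonian, and let $G \in \Barn$ be a smallest counterexample to Barnette's Conjecture. If $G$ is itself a brace we are done, so $G$ is cubic, 3-connected, planar, bipartite but \emph{not} 2-extendable: there exist two disjoint edges $e,f$ lying in no common perfect matching. The plan is to use this "bad pair" to locate a small 3-edge-cut or a structural decomposition of $G$ along which one can perform a reduction — contracting or deleting a piece — that yields a smaller graph still in $\Barn$ (or a smaller brace), arguing that Hamiltonicity of the pieces can be glued back to a Hamiltonian cycle of $G$. For cubic graphs the natural tool is: a cubic 3-connected graph that is not a brace has a nontrivial 3-edge-cut $\partial(X)$ with $|X|\ge 2$ and $|V\setminus X|\ge 2$ (tightness/bricks-and-braces structure for bipartite graphs; cf. the tight cut decomposition of Lovász), and along such a 3-edge-cut one can form the two "contractions" $G_1, G_2$ (each obtained by contracting one side to a single vertex). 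In the cubic bipartite planar setting each $G_i$ is again cubic, 3-connected, planar, bipartite — hence smaller members of $\Barn$ — and one shows a Hamiltonian cycle of $G$ can be assembled from Hamiltonian cycles of $G_1$ and $G_2$ that use a prescribed pair of the three cut edges in a compatible way. Iterating the tight-cut decomposition terminates at braces, which are Hamiltonian by hypothesis, and then one reassembles up the decomposition tree.

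**Where the real work is.**

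The main obstacle is the gluing step: a Hamiltonian cycle of a 3-edge-cut contraction $G_i$ meets the contracted vertex via exactly two of the three cut edges, but the two halves need to agree on \emph{which} two cut edges are used, and in a cubic graph one cannot freely choose this. So the induction hypothesis must be strengthened: instead of bare Hamiltonicity one should carry along a statement of the form "for every choice of two of the three edges in the cut, there is a Hamiltonian cycle of $G_i$ using exactly those two" — i.e. a version of the classical "Barnette–Goodey"-style statements about Hamiltonian cycles through prescribed edges at a 3-cut. Establishing that this stronger property is preserved under the reduction (and is implied for braces) is the crux; it is closely related to known facts that in cubic 3-connected planar bipartite graphs one has flexibility in routing Hamiltonian cycles through small edge cuts. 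A secondary technical point is verifying that the contractions $G_i$ remain simple, 3-connected, and bipartite (bipartiteness can fail under naive contraction if the cut is not "conformal"; one must use that in a bipartite cubic graph a tight cut is conformal, keeping the parity right), and that the decomposition strictly decreases size so the induction is well-founded. I expect the bipartiteness/3-connectivity bookkeeping to be routine, and the strengthened gluing hypothesis to be the heart of the argument.
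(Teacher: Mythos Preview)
Your ``only if'' direction is fine. For the ``if'' direction, your tight-cut decomposition plan is natural, but you have correctly located and then under-sold the real obstruction: the gluing step requires each piece to admit a Hamiltonian cycle using a \emph{prescribed} pair of the three cut edges (equivalently, the $H^-$-property, or $P_3$-Hamiltonicity), whereas the hypothesis of the lemma only hands you bare Hamiltonicity of braces. You cannot simply strengthen the induction hypothesis, because the base case (braces) is exactly where you have no leverage beyond ``Hamiltonian.'' Deriving the stronger property for braces from Hamiltonicity alone is not routine bookkeeping; it is essentially the whole content of the lemma, and your proposal gives no mechanism for it.

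The paper does not attempt a self-contained inductive gluing. It proves the lemma by combining two cited results: Kelmans' theorem that Barnette's Conjecture holds iff it holds for cyclically $4$-connected members of $\Barn$, and a corollary of Holton--Plummer showing that for cubic $3$-connected bipartite graphs, being cyclically $4$-connected is equivalent to being a brace. The non-trivial work (reducing a hypothetical non-Hamiltonian graph in $\Barn$ to a cyclically $4$-connected one) is outsourced to Kelmans. Later in the paper the authors \emph{do} carry out a tight-cut gluing argument of the kind you sketch (their Lemma on tight cut contractions and $P_3$/$P_4$-Hamiltonicity), but even there they rely on the Kelmans/Hertel equivalences (Barnette $\Leftrightarrow$ $H^-$-property $\Leftrightarrow$ $P_4$-Hamiltonicity for all of $\Barn$) to upgrade the hypothesis before decomposing. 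So either route ultimately leans on an external strengthening theorem; your sketch is missing precisely that ingredient.
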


We show that this result can be derived from a combination of statements from across the literature (see \Cref{sec:pfaffian}).
It hints at an extension of the conjecture to a larger class of graphs.
Despite Tutte's Conjecture on the matter being false, there is a substantial class of cubic, 3-connected, non-planar, bipartite graphs which are Hamiltonian if and only if Barnette's conjecture holds.
We relegate the introduction of the notion of Pfaffian graphs to \Cref{sec:pfaffian}.

\begin{theorem}\label{thm:mainthmtwo}
\Cref{con:barnette} holds if and only if all cubic, 3-connected, Pfaffian, bipartite graphs are Hamiltonian.
\end{theorem}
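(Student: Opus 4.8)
The plan is to prove both implications by reducing to braces and then invoking the structure theory of Pfaffian bipartite graphs recalled in \Cref{sec:pfaffian}. For the easy direction, suppose every cubic, $3$-connected, Pfaffian, bipartite graph is Hamiltonian. Every planar bipartite graph is Pfaffian (Kasteleyn's theorem), and every brace on at least six vertices is $3$-connected; since a cubic brace cannot be $C_4$, it has at least six vertices and is therefore $3$-connected. Hence every cubic, planar brace lies in the hypothesised class, is thus Hamiltonian, and \Cref{lem:bracehamil} yields \Cref{con:barnette}.

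For the converse, assume \Cref{con:barnette}; then by \Cref{lem:bracehamil} every cubic, planar brace is Hamiltonian, and a direct inspection shows that the Heawood graph is Hamiltonian. Let $G$ be cubic, $3$-connected, Pfaffian and bipartite. First I would reduce to the case that $G$ is a brace. Being cubic, bipartite and bridgeless, $G$ is $3$-edge-colourable and hence matching covered, so it admits a tight cut decomposition into braces. The first point to check is that every nontrivial tight cut $\partial(X)$ of a cubic matching covered graph has exactly three edges: the two parts of $X$ differ in size by exactly one, and a degree count in the cubic graph shows that the numbers of cross edges incident with the two colour classes inside $X$ differ by exactly three; were the larger of the two bigger than three, the smaller would be positive, producing an edge contained in no perfect matching, contradicting that $G$ is matching covered. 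Consequently contracting one side of a tight cut preserves cubicity, and one checks that it preserves bipartiteness and $3$-connectivity, so the braces of the decomposition are cubic; they are Pfaffian since the braces of a tight cut decomposition of a Pfaffian graph are again Pfaffian. As Hamilton cycles can be glued across three-edge tight cuts, it then suffices to prove that every cubic, Pfaffian brace is Hamiltonian; this step runs parallel to the reduction underlying \Cref{lem:bracehamil}, with ``Pfaffian'' playing the role of ``planar''.

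It remains to classify the cubic, Pfaffian braces. By the structure theorem for Pfaffian braces (McCuaig; Robertson, Seymour and Thomas), every Pfaffian brace is planar, or isomorphic to the Heawood graph, or a trisum of smaller braces glued along a common $4$-cycle $C$. In a nondegenerate trisum each vertex of $C$ lies on two edges of $C$ and has degree at least $3$ inside each of the three glued braces (braces have minimum degree at least $3$), so its degree in the trisum is at least $3\cdot 3-4=5$; hence no trisum is cubic. Therefore every cubic, Pfaffian brace is planar or the Heawood graph, and in either case it is Hamiltonian by the previous paragraph, which completes the proof.

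I expect the main obstacle to be the reduction to braces -- specifically, verifying that it produces \emph{cubic} braces and that Hamiltonicity lifts across the resulting tight cuts, the latter being the point at which the stronger notions of Hamiltonicity discussed elsewhere in the paper are likely to enter -- together with the bookkeeping in the trisum step (the precise degeneracy conditions, and confirming that the Heawood graph genuinely occurs as a base case of the structure theorem rather than as a trisum). The remaining ingredients -- Kasteleyn's theorem, $3$-edge-colourability of cubic bipartite graphs, and the structure theorem itself -- are imported wholesale from \Cref{sec:pfaffian} and the literature.
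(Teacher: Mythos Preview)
Your argument has two genuine gaps, both in the hard direction.

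First, the claim that ``Hamilton cycles can be glued across three-edge tight cuts'' is unjustified and, without additional hypotheses, false. A Hamiltonian cycle in each tight cut contraction uses exactly two of the three cut edges at the contraction vertex, but nothing forces the two pairs to coincide; when they do not, the cycles cannot be combined into a Hamiltonian cycle of $G$. The paper discusses precisely this obstruction in \Cref{sec:further}, exhibiting the Horton graph as a non-Hamiltonian cubic, $3$-connected, bipartite graph all of whose braces are even $P_2$-Hamiltonian. This is why the paper's proof detours through $P_4$-Hamiltonicity: \Cref{thm:equivhertel} upgrades Barnette's conjecture to $P_4$-Hamiltonicity of all graphs in $\Barn$, and item~(i) of \Cref{lem:tightcutproperty} then lifts $P_4$-Hamiltonicity (but not plain Hamiltonicity) from the braces back to $G$. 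You flag this step as a likely obstacle in your final paragraph, and it is one; it is not a matter of bookkeeping but the place where a strictly stronger property is required.

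Second, your degree argument that ``no trisum is cubic'' is wrong. In a trisum one deletes an \emph{arbitrary} subset $S \subseteq E(C)$; when $S = E(C)$ each vertex of $C$ loses both of its $C$-edges, and your count $3\cdot 3 - 4 = 5$ drops to $3$. The paper calls this the \emph{cubic trisum} and shows in \Cref{cor:pfaffcubictrisum} that the cubic Pfaffian braces are exactly the Heawood graph together with graphs built from cubic planar braces by repeated cubic trisums. So there \emph{are} non-planar cubic Pfaffian braces other than Heawood, and the classification step cannot be finished by a nonexistence claim; it requires \Cref{lem:bracestosum}, which shows that the cubic trisum of $P_4$-Hamiltonian braces is again $P_4$-Hamiltonian. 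Note that this second gap is another place where mere Hamiltonicity would not suffice, since one again needs control over which edges the Hamiltonian cycles use at the shared $4$-cycle.
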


To prove this result, we discuss how Hamiltonian cycles interact with a certain type of cut central to structural Matching Theory.
We call a connected graph $G$ \emph{matching covered} if it is 1-extendable. 
A cut $\Cut{X}$ in a matching covered graph $G$ is called \emph{tight} if for any perfect matching $M$ of $G$, we have $|\Cut{X} \cap M| = 1$.
We call a tight cut \emph{non-trivial} if each side of the cut contains at least two vertices.
If we contract one of the sides of a tight cut into a single vertex, we call the resulting graph a \emph{tight cut contraction}.
By repeatedly searching out non-trivial tight cuts and repeating this contraction operation, we arrive at a list of graphs which do not have non-trivial tight cuts.
Such graphs are called bricks if they are non-bipartite and braces otherwise.
The procedure we allude to here is called the \emph{tight cut decomposition} due to Lov{\'a}sz \cite{lovasz1987matching}.

Focusing on matching covered graphs might at first seems like a strong restriction, but the following result by K\H{o}nig implies that all $k$-regular, bipartite graphs are in fact matching covered.

\begin{theorem}[K\H{o}nig \cite{konig1916graphen}]
    For all $k \in \N$, the edge set of a $k$-regular, bipartite graph can be partitioned into $k$ perfect matchings.
\end{theorem}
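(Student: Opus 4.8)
The plan is to argue by induction on $k$, the inductive step peeling off a single perfect matching. The base case $k=0$ is immediate: a $0$-regular graph has no edges, so the empty collection is the desired partition into $0$ perfect matchings. For the inductive step it suffices to show that every $k$-regular bipartite graph $G$ with $k \geq 1$ contains a perfect matching $M$: granting this, $G-M$ is $(k-1)$-regular and bipartite, the induction hypothesis partitions $\E{G-M}$ into $k-1$ perfect matchings, and adjoining $M$ gives a partition of $\E{G}$ into $k$ perfect matchings.

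So fix a $k$-regular bipartite graph $G$ with bipartition classes $A$ and $B$ and $k \geq 1$. Counting the edges incident to each side gives $k|A| = |\E{G}| = k|B|$, hence $|A| = |B|$; therefore it is enough to find a matching saturating $A$, since any such matching is automatically perfect. To produce it I would invoke Hall's marriage theorem, which reduces the task to verifying $|N(S)| \geq |S|$ for every $S \subseteq A$. This is exactly where regularity is used: every edge incident to a vertex of $S$ has its other endpoint in $N(S)$, so the number of such edges equals $k|S|$, while it is at most $k|N(S)|$ because each vertex of $N(S)$ has degree $k$. Dividing by $k \geq 1$ yields $|S| \leq |N(S)|$, so Hall's condition holds and the required matching exists.

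The only real content is the Hall-condition check, i.e.\ the estimate $k|S| \leq k|N(S)|$; everything else is bookkeeping, so I do not anticipate a genuine obstacle. If one prefers to avoid Hall's theorem, an alternative is to observe that $\tfrac1k$ times the bipartite adjacency matrix of $G$ is doubly stochastic and to apply the Birkhoff--von Neumann theorem to extract a permutation matrix, i.e.\ a perfect matching, supported on the edges of $G$; but the Hall-based argument is more self-contained and delivers the statement directly.
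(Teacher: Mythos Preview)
Your argument is correct and is the standard textbook proof: induct on $k$, use a double-count to verify Hall's condition, extract a perfect matching, and recurse on the $(k-1)$-regular remainder. There is no gap.

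There is nothing to compare against, however: the paper does not supply its own proof of this theorem. It is stated with attribution to K\H{o}nig and used only to derive the immediate corollary that every $k$-regular connected bipartite graph is matching covered. So your proof stands on its own as a complete justification of a result the paper merely cites.
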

\begin{corollary}\label{cor:matcovcor}
    Any $k$-regular, connected, bipartite graph is matching covered.
\end{corollary}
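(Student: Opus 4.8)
The plan is to read the statement off directly from K\H{o}nig's theorem with essentially no extra work. Let $G$ be a $k$-regular, connected, bipartite graph. We may assume $k \geq 1$, since for $k = 0$ the graph $G$ is a single vertex and the claim is vacuous. Applying K\H{o}nig's theorem to $G$ yields a partition $\E{G} = M_1 \cup \dots \cup M_k$ into $k$ perfect matchings; in particular $G$ has a perfect matching, so $|\V{G}|$ is even. Combined with connectedness and $k$-regularity this also settles the borderline case $|\V{G}| = 2$: then $G \cong K_2$ and its unique edge is a perfect matching, so we may from now on assume $|\V{G}| \geq 4$, as required by the definition of $1$-extendability.

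Now I would take an arbitrary edge $e \in \E{G}$ and, together with it, an arbitrary matching $F \subseteq \E{G}$ of size at most $1$; if $F = \emptyset$ any $M_i$ works, and otherwise $F = \{e\}$ for some edge $e$. Since the sets $M_1, \dots, M_k$ partition $\E{G}$, there is an index $i \in \{1, \dots, k\}$ with $e \in M_i$, and then $M_i$ is a perfect matching of $G$ containing $F$. Hence every matching of size at most $1$ extends to a perfect matching, i.e.\ $G$ is $1$-extendable, which by definition means $G$ is matching covered.

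There is no real obstacle here: the corollary is a reformulation of K\H{o}nig's theorem. The only point that needs a moment's care is the cardinality clause $|\V{G}| \geq 2k+2$ built into the definition of $k$-extendability, which for $k = 1$ forces $|\V{G}| \geq 4$; this is the reason the small-order cases are discussed separately above, and once they are dispatched the general argument applies verbatim.
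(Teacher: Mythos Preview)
Your argument is correct and is exactly the intended one: the paper states this as an immediate corollary of K\H{o}nig's theorem without giving any further proof, and your derivation---every edge lies in one of the $k$ perfect matchings partitioning $\E{G}$---is precisely the one-line justification the paper leaves implicit. The only quibble is that, under the paper's definition of $1$-extendable (which requires $|\V{G}|\geq 4$), the graph $K_2$ is technically outside the scope of the definition rather than an instance of it; but this is a boundary convention, not a mathematical gap.
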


In particular, if we consider a Hamiltonian graph $G$ with an even number of vertices, then it turns out that all the structure belonging to its Hamiltonian cycles is contained in the subgraph induced by those edges of $G$ that are contained in a perfect matching.
We call this subgraph the \emph{cover graph} of $G$.

\begin{observation}
    Let $G$ be a Hamiltonian graph with an even number of vertices, then every Hamiltonian cycle of $G$ can be decomposed into two perfect matchings.
\end{observation}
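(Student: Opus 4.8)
The plan is to exploit the fact that a Hamiltonian cycle of a graph on an even number of vertices is itself an even cycle, and that the edge set of an even cycle splits canonically into its two ``alternating'' halves, each of which is a perfect matching of the cycle (and hence of the whole graph, since the cycle is spanning).

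First I would fix a Hamiltonian cycle $C$ of $G$ and write it as $v_1 v_2 \cdots v_n v_1$, where $n = |\V{G}|$ is even by hypothesis. I would then set
$M_1 := \{\, v_{2i-1}v_{2i} : 1 \le i \le n/2 \,\}$ and $M_2 := \{\, v_{2i}v_{2i+1} : 1 \le i \le n/2 \,\}$, with vertex indices read modulo $n$. A direct check shows $M_1 \cap M_2 = \emptyset$ and $M_1 \cup M_2 = \E{C}$, and that each vertex $v_j$ lies in exactly one edge of $M_1$ and exactly one edge of $M_2$. Since $C$ is Hamiltonian, $\{v_1,\dots,v_n\} = \V{G}$, so $M_1$ and $M_2$ are perfect matchings of $G$, which yields the desired decomposition of $\E{C}$.

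There is essentially no obstacle here; the only point to be careful about is the parity bookkeeping. The alternating partition of $\E{C}$ closes up consistently precisely because $n$ is even: otherwise the two ``boundary'' edges $v_nv_1$ and $v_1v_2$ would be forced into the same class and $v_1$ would be covered twice by one part and not at all by the other. This is exactly where the assumption on the number of vertices enters, and it is also the reason the cover graph is the right object to study, since by \Cref{cor:matcovcor} the spanning cycle $C$ lives inside the matching covered part of $G$.
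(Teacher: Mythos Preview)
Your argument is correct and is exactly the standard one: an even cycle has a canonical $2$-edge-colouring into two perfect matchings, and since the cycle is spanning these are perfect matchings of $G$. The paper states this as an observation without proof, so there is nothing further to compare; the only quibble is that your closing reference to \Cref{cor:matcovcor} is misplaced (that corollary concerns $k$-regular bipartite graphs, not arbitrary Hamiltonian $G$), but this aside is inessential to the proof.
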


It turns out that many important properties are preserved under taking tight cut contractions and are thus also preserved for the braces of a matching covered, bipartite graph.
We prove that this is also true for some types of Hamiltonicity in cubic, 3-connected, bipartite graphs.
This is interesting on its own, but it also contributes to the proof of \Cref{thm:mainthmtwo}.

By $P_k$ we denote a path on $k$ vertices and note that $P_k$ has length $k-1$.
We call a graph $G$ with at least $k$ vertices \emph{$P_k$-Hamiltonian} if any path of length $k-1$ in $G$ is contained in some Hamiltonian cycle of $G$.
Furthermore, we say that a graph $G$ has the $H^-$-property, respectively the $H^{+-}$-property, if for any edge in $G$ there exists a Hamiltonian cycle in $G$ which avoids said edge, respectively for any two distinct edges $e$ and $f$ there exists a Hamiltonian cycle containing $e$ and avoiding $f$.

\begin{theorem}\label{thm:mainthmone}
Let $G$ be a cubic, $3$-connected, bipartite graph.
The following hold
\begin{enumerate}
    \item If the braces of $G$ are $P_4$-Hamiltonian, then $G$ is $P_4$-Hamiltonian.
    
    \item $G$ has the $H^-$-property if and only if the braces of $G$ have the $H^-$-property.
    
    \item $G$ is $P_3$-Hamiltonian if and only if the braces of $G$ are $P_3$-Hamiltonian.
    
    \item $G$ has the $H^{+-}$-property if and only if the braces of $G$ have the $H^{+-}$-property.
\end{enumerate}
\end{theorem}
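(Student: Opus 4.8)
The plan is to analyze a single tight cut contraction and show that, for a cubic $3$-connected bipartite graph, the relevant Hamiltonicity properties transfer between a graph and the two graphs obtained by contracting each side of a non-trivial tight cut. Since the tight cut decomposition terminates in the braces of $G$, and since the number of contractions is finite, iterating the single-step argument yields the full statement. So first I would set up the single-step picture: let $\Cut{X}$ be a non-trivial tight cut in $G$, let $G_1 = G/(\overline{X} \to x_1)$ and $G_2 = G/(X \to x_2)$ be the two tight cut contractions. The key structural fact to nail down here is that, because $G$ is cubic and the cut is tight (so $|\Cut{X} \cap M| = 1$ for every perfect matching, forcing $|\Cut{X}|$ to be odd and at least $3$) and $3$-edge-connected, in fact $|\Cut{X}| = 3$; hence the contracted vertices $x_1, x_2$ have degree $3$, so $G_1$ and $G_2$ are again cubic. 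One also checks $3$-connectedness and bipartiteness are preserved — bipartiteness because contracting one colour class of the bipartition of $\overline{X}$-side keeps things bipartite, using that a tight cut in a bipartite matching covered graph is an "edge cut" separating the two sides consistently with the $2$-colouring.

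Next I would describe the bijective correspondence between Hamiltonian cycles of $G$ and pairs of Hamiltonian cycles of $G_1$ and $G_2$ that agree on which edge of the $3$-cut $\Cut{X} = \{e_1, e_2, e_3\}$ they use. Concretely, a Hamiltonian cycle $C$ of $G$ crosses $\Cut{X}$ an even number of times; since the cut has size $3$ it crosses exactly twice, say using $e_i$ and $e_j$. Then $C \cap G[X]$ together with $e_i, e_j$ and the contracted vertex $x_1$ forms a Hamiltonian cycle $C_1$ of $G_1$, and symmetrically $C_2$ of $G_2$; conversely, given Hamiltonian cycles $C_1$ of $G_1$ and $C_2$ of $G_2$ such that each uses the same pair $\{e_i,e_j\}$ of the three cut edges at its contracted vertex, gluing along these recovers a Hamiltonian cycle of $G$. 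This reduces each claim to a statement about forcing the contracted vertex $x_1$ (resp.\ $x_2$) to be incident in the Hamiltonian cycle to a prescribed pair of the three cut edges — and this is exactly the leverage that $P_k$-Hamiltonicity, the $H^-$-property, and the $H^{+-}$-property give us at a degree-$3$ vertex, since a path through $x_1$ of length $1$, $2$, or $3$, or an edge to be avoided at $x_1$, pins down which cut edges are used.

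Then I would run the four cases. For (i): given a path $P$ of length $3$ in $G$, if $P$ lies entirely on one side, say $P \subseteq G[X]$, then $P$ sits in $G_1$ and, choosing in $C_1$ the cut-edge pair not both forced away by $P$ — here one uses that a length-$3$ path meets at most... — one extends; if $P$ crosses $\Cut{X}$, it uses one or two cut edges, which after contraction become a path of length $\le 3$ through $x_1$ in $G_1$ and a path of length $\le 3$ through $x_2$ in $G_2$, both of length $\ge 1$, so $P_4$- (and, where the induced subpath is shorter, $P_3$- or $P_2$-)Hamiltonicity of the braces handles both sides with matching cut-edge choices. The "only if" directions for (iii) and (iv) and one direction of (ii) are easier: a Hamiltonian cycle in $G$ projects to Hamiltonian cycles in $G_1, G_2$, and a prescribed path/edge of the brace lifts (via a spanning-tree-type argument through the decomposition, or directly since a brace is one specific tight cut contraction iterate) to a path/edge of $G$, whose guaranteed Hamiltonian cycle projects back down. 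The slightly delicate point, and the one I would treat carefully, is that the four properties are not all self-strengthening under projection in the same way: $P_4$-Hamiltonicity of $G_1$ is needed to handle a length-$3$ path that becomes a length-$2$ or length-$1$ path at $x_1$, which is why (i) is only an implication and not an equivalence, whereas for $H^-$, $P_3$, and $H^{+-}$ the contracted-vertex constraints are exactly of the same type ($\le 1$ edge avoided, $\le 2$-edge path, or one edge kept + one avoided), giving genuine equivalences.

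I expect the main obstacle to be the bookkeeping in case (i) — ensuring that when a length-$3$ path crosses the cut, the two pieces on the two sides can be extended to Hamiltonian cycles using a \emph{common} pair of cut edges, i.e.\ that the $P_4$-Hamiltonicity (and induced $P_3$-, $P_2$-Hamiltonicity) of each side can be invoked with the cut-edge incidences at $x_1$ and $x_2$ synchronized. This requires knowing that $P_4$-Hamiltonicity of a cubic brace implies $P_3$- and "$P_2$"-Hamiltonicity with any prescribed admissible extension at a degree-$3$ vertex — which follows because at a degree-$3$ vertex a shorter path through it can be extended by one of the two remaining incident edges, and one must check both choices are realizable, or reduce to the longer-path case. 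A secondary obstacle is verifying cleanly that cubicness, $3$-connectivity, and bipartiteness are all inherited by $G_1$ and $G_2$ so that the induction stays inside the stated graph class; this is where the structural fact $|\Cut{X}| = 3$ does the heavy lifting and should be proved first.
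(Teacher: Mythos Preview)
Your proposal is correct and follows essentially the same route as the paper: reduce to a single tight-cut step (the paper's Lemma~\ref{lem:tightcutproperty}), use that $|\Cut{X}|=3$ so both contractions stay cubic, $3$-connected, and bipartite (the paper cites McCuaig for this, your sketch reproves it), set up the gluing correspondence between Hamiltonian cycles of $G$ and compatible pairs in $G_1,G_2$, and run the case analysis for each of the four properties before iterating down to the braces. One small correction: a $P_4$ crossing $\Cut{X}$ uses exactly one cut edge, never two---the three cut edges form a matching whose $X$-endpoints all lie in the same colour class (Lemma~\ref{lem:tightcutcolour}), so a middle edge joining two cut-edge endpoints would itself cross the cut, contradicting disjointness; the paper notes this explicitly and it slightly simplifies your bookkeeping in case~(i).
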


% Note that we have at least one non-planar, cubic, $3$-connected graph, namely $K_{3,3}$, which has the $H^-$-property and is both $P_3$- and $P_4$-Hamiltonian.
% (There is another obvious candidate---the Heawood graph---which we will present later.)
% Therefore, this also gives us a tool to generate more cubic, non-planar, Hamiltonian, bipartite graphs. \textcolor{red}{Bisher wurde nicht erklärt ob bzw wie man die tight cut zerlegung umkehren kann, dementsprechend könnten einige Leser diesen Kommentar nicht verstehen}

There have also been computational efforts concerning \Cref{con:barnette}.
The first of which, by Holton et al.\ \cite{holton1984hamiltonian} in 1985, also introduced an elegant generation method for the graphs in $\Barn$ and this is still the method used in recent efforts (see \cite{brinkmann2021minimality}).
To support the relevance of \Cref{lem:bracehamil} to the effort of resolving \Cref{con:barnette}, we provide a short list of amendments to the generation procedure in \Cref{sec:generation} that allow us to keep track of whether a graph we have generated is a brace or not.
For each generated graph this requires us to save some more information.
This amounts to at most half the number of vertices of the graph.
Using this additional information we can check in linear time whether the generated graph is a brace.
The current state of the computational effort is that all graphs in $\Barn$ with at most 90 vertices are Hamiltonian, announced by Brinkmann et al.\ \cite{brinkmann2021minimality}.

More or less separate from the rest of the presentation, we present the theorem which motivated this project.
It is almost trivial to prove, but hopefully motivates further inquires into the connections between Matching Theory and the study of Hamiltonicity.

\begin{theorem} \label{thm:tightcutham}
	The bricks and braces of the cover graph of a Hamiltonian graph with an even number of vertices are Hamiltonian.
\end{theorem}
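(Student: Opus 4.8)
Let me think about this. We have a Hamiltonian graph $G$ with an even number of vertices. The cover graph $\widehat{G}$ is the subgraph induced by the edges that lie in some perfect matching. Since a Hamiltonian cycle on an even number of vertices decomposes into two perfect matchings, all edges of some fixed Hamiltonian cycle $C$ lie in $\widehat{G}$, so $\widehat{G}$ contains $C$ and is in particular Hamiltonian. The cover graph is matching covered (this is essentially the definition — it's the union of edges in perfect matchings; one should check connectivity, but since it contains the Hamiltonian cycle $C$ it's connected). So $\widehat{G}$ is a matching covered graph admitting a tight cut decomposition into bricks and braces.

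The claim is that each brick/brace in this decomposition is Hamiltonian. The natural approach: it suffices to show that a single tight cut contraction of a Hamiltonian matching covered graph is Hamiltonian, and then induct on the number of contractions. So let $\partial(X)$ be a tight cut in $\widehat{G}$ and let $\widehat{G}/\overline{X}$ be the contraction of the complementary side $\overline{X} = V(\widehat{G}) \setminus X$ to a single vertex $\overline{x}$. We have a Hamiltonian cycle $C$ in $\widehat{G}$; we want to produce a Hamiltonian cycle in $\widehat{G}/\overline{X}$.

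Here is the key point. Decompose $C$ into two perfect matchings $M_1, M_2$. Because $\partial(X)$ is tight, $|M_i \cap \partial(X)| = 1$ for each $i$, so $|C \cap \partial(X)| = 2$: the Hamiltonian cycle crosses the cut exactly twice. Hence $C \cap G[X]$ is a single path $P$ with both endpoints in $X$ incident to the two crossing edges, and $C \cap G[\overline{X}]$ is a single path $Q$ spanning $\overline{X}$. In the contraction, replace $Q$ by the single vertex $\overline{x}$: the two edges of $\partial(X) \cap C$ become two edges incident to $\overline{x}$ in $\widehat{G}/\overline{X}$ (they survive contraction since they are not inside $\overline{X}$), and together with $P$ they form a cycle through $\overline{x}$ and every vertex of $X$ — i.e.\ a Hamiltonian cycle of $\widehat{G}/\overline{X}$. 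Now iterate: $\widehat{G}/\overline{X}$ is again matching covered (tight cut contractions preserve this) and Hamiltonian, so we can keep contracting along tight cuts until we reach the bricks and braces, each of which inherits a Hamiltonian cycle. One subtlety: in a full tight cut decomposition each brick/brace arises by contracting *both* sides of a nested family of cuts down; but this is just repeated application of the single-contraction step to both sides symmetrically, so induction handles it.

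The main thing to be careful about — though it is genuinely easy, matching the paper's remark — is the bookkeeping that the two crossing edges of $C$ are not destroyed by the contraction and that contracting $\overline{X}$ to a point turns the spanning path $Q$ of $\overline{X}$ plus those two edges into a valid cycle edge-pair at $\overline{x}$; this uses that $\partial(X)$ is a genuine edge cut (no edge of $\partial(X)$ has both ends in $\overline{X}$) and that $C$ uses exactly two cut edges, which is forced by tightness. The rest is the routine observation that matching-coveredness is preserved so the decomposition is well-defined, which is standard (Lovász \cite{lovasz1987matching}).
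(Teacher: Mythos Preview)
Your proof is correct and follows essentially the same approach as the paper: the paper also reduces to showing that a single tight cut contraction of a Hamiltonian matching covered graph is Hamiltonian (its Lemma~\ref{lem:tightcutham}), argues that the Hamiltonian cycle $C$ meets the tight cut in exactly two edges, and then contracts. The only cosmetic difference is that the paper isolates the ``$C$ crosses $\partial(X)$ exactly twice'' step as a separate lemma about $M$-conformal cycles (using the symmetric-difference matching $(M\setminus E(C))\cup(E(C)\setminus M)$), whereas you obtain it directly from the partition $E(C)=M_1\cup M_2$ and $|M_i\cap\partial(X)|=1$; these are the same idea.
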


In \Cref{sec:prelim}, we start by introducing many of the basic matching-theoretic concepts needed for our approach, including some more rigorous definitions for concepts mentioned in this introduction.
We then give a short proof of \Cref{thm:tightcutham} in \Cref{sec:note}.
Following this, we get into the core of the article and tackle cubic, Pfaffian, bipartite graphs in \Cref{sec:pfaffian}.
This is then followed by a discussion on the generation of graphs in $\Barn$ and how to keep track of their tight cuts in \Cref{sec:generation}.
In \Cref{sec:further}, we discuss our results and provide some open questions.

\section{Preliminaries}\label{sec:prelim}

% !TeX spellcheck = en_GB

All graphs we consider are simple.
Since we will mostly deal with bipartite graphs, we establish that by $G = (A \cup B, E(G))$ we denote a bipartite graph $G$ with the edge set $E(G)$ and a vertex set $V(G)$, such that $A$ and $B$ are the two colour classes that partition $V(G)$.
A graph $G$ is called \emph{connected} if all pairs of vertices can be connected by a path and it is called $k$-connected if for any set $S \subseteq \V{G}$ with $|S| \le k-1$ the graph $G - S$ remains connected.
The \emph{components} of a graph are its maximal connected subgraphs.
A component containing only one vertex is called \emph{trivial}.

Given a cycle or path, we will call it \emph{even} if it contains an even number of edges and \emph{odd} otherwise.
% For $u,v \in V(G)$, we call $P \subseteq G$ a \emph{$u$-$v$-path} if its endpoints are $u$ and $v$.
% Two paths are called \emph{internally disjoint} if they do not share internal vertices.
% \begin{theorem}[Menger \cite{menger1927allgemeinen}]
%     A graph is $k$-connected if an only if there exists $k$ internally disjoint $u$-$v$-paths for every pair $u,v \in V(G)$.
% \end{theorem}
We call a graph $G$ \emph{$k$-regular} if all vertices in $G$ have degree exactly $k$.
If a graph is 3-regular, we also call it \emph{cubic}.
A subgraph $H$ of a graph $G$ is called \emph{spanning} if it contains all vertices of $G$.
A spanning cycle is commonly called a \emph{Hamiltonian cycle}\footnote{This concept is more appropriately attributed to Kirkman \cite{kirkman1856polyhedra}, but the connection to William R.\ Hamilton is too well-entrenched to be severed now.}, which leads to a graph containing such a cycle being called \emph{Hamiltonian}.

Given a subgraph $H$ of a graph $G$, we call $H$ \emph{conformal} if $G - H$ contains a perfect matching and $H$ is called \emph{$M$-conformal} if there exists an $M \in \Perf{G}$ such that $M$ contains no edges with one endpoint in $G-H$ and the other in $H$.

% In general, a \emph{matching} $M \subseteq \E{G}$ in a graph $G$ is a set of pairwise disjoint edges.
% We denote the set of perfect matchings of $G$ as $\Perf{G}$.

% \begin{theorem}[K\H{o}nig \cite{konig1916graphen}]
%     For all $k \in \N$ the edge set of a $k$-regular, bipartite graph can be decomposed into $k$ perfect matchings.
% \end{theorem}

A \emph{cut around $X$} in a graph $G$, for a set $X \subseteq V(G)$, is a set $\CutG{G}{X} \subseteq \E{G}$, such that an edge $uv$ is in $\CutG{G}{X}$ if and only if it has exactly one endpoint in $X$.
We will drop the index $G$ if the graph is clear from the context.
Any set of edges $F \subseteq E(G)$ is called a \emph{cut}, if there exists a set of vertices $X \subseteq V(G)$ such that $F = \Cut{X}$.
The order of a cut $\Cut{X}$ is defined as $|\Cut{X}|$.
We also set $\overline{X} = \V{G} \setminus X$ and call $X$ and $\overline{X}$ the \emph{shores} of the cut $\Cut{X}$.
If $X$ or $\overline{X}$ only contain one element, we call $\Cut{X}$ \emph{trivial}.
A cut $\Cut{X}$ with the property that $|\Cut{X} \cap M| = 1$ for all perfect matchings $M$ of $G$ is called \emph{tight}.
Given a tight cut $\Cut{X}$ in a matching covered graph $G$, we denote the result of contracting $X$ into a single vertex $c \not\in \V{G}$ and deleting all resulting parallel edges and loops as $\ContractXinGtoV{X}{G}{c}$, which is called a \emph{tight cut contraction}.

\begin{lemma}\label{lem:tightcut}
If $G$ is a matching covered graph, then the following properties of $G$ are preserved for its tight cut contractions: planarity, ($1$-)connectivity, and being bipartite.
\end{lemma}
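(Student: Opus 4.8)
The plan is to fix a tight cut $\Cut{X}$ of the matching covered graph $G$ and, exploiting the symmetry that exchanging the two shores exchanges the two contractions, to argue only for $G' := \ContractXinGtoV{X}{G}{c}$. If $\Cut{X}$ is trivial then $G' \cong G$ and there is nothing to do, so assume it is non-trivial. Connectivity is the easy part: any path of $G$ between two vertices of $\overline{X}$ descends to a walk of $G'$ once each maximal excursion into $X$ is collapsed onto $c$, and $c$ has a neighbour since $\Cut{X} \ne \emptyset$; hence $G'$ is connected. Only connectedness of $G$ enters here.

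Both of the remaining properties hinge on one fact I would establish first: for a tight cut $\Cut{X}$ in a matching covered graph, \emph{each shore induces a connected subgraph}. To see this, suppose $G[X]$ had (at least) two components $X_1, X_2$. Since $G$ is connected and $\overline{X} \ne \emptyset$, and since no edge leaves $X_i$ without leaving $X$, each $X_i$ is incident to an edge $e_i \in \Cut{X}$; as $G$ is matching covered some perfect matching $M_i$ contains $e_i$, and by tightness $e_i$ is the only edge of $M_i$ in $\Cut{X}$. Now $M_1 \triangle M_2$ is a disjoint union of cycles and $(M_1 \triangle M_2) \cap \Cut{X} = \{e_1, e_2\}$, so the cycle through $e_1$ also runs through $e_2$ and meets $\Cut{X}$ nowhere else; deleting $e_1$ and $e_2$ from it leaves a path inside $G[X]$ joining the $X$-end of $e_1$ to the $X$-end of $e_2$, i.e.\ joining $X_1$ to $X_2$ --- contradiction. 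The same argument handles $\overline{X}$. I expect this alternating-cycle exchange to be the main obstacle, since it is the one place where tightness and matching coverability must genuinely be combined; what follows is formal.

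Granting shore-connectivity, planarity of $G'$ drops out: contract the edges of a spanning tree of the connected graph $G[X]$ one by one and then delete the resulting loops and parallel edges --- each step preserves planarity --- or simply note that $G'$ is a minor of $G$. For bipartiteness, write $G = (A \cup B, E(G))$. A perfect matching $M$ uses exactly one edge of $\Cut{X}$ and matches the other $|X|-1$ vertices of $X$ among themselves; since each such internal edge pairs an $A$-vertex with a $B$-vertex, $M$ covers equally many vertices of $A \cap X$ and of $B \cap X$ within $X$, so the $X$-endpoint of the crossing edge of $M$ lies in whichever of $A \cap X$, $B \cap X$ is the larger class --- a class independent of $M$. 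As $G$ is matching covered, every edge of $\Cut{X}$ arises as such a crossing edge, hence all edges of $\Cut{X}$ have their $X$-endpoint in that fixed class, say $A \cap X$, and their $\overline{X}$-endpoint in $B \cap \overline{X}$. Therefore $\bigl(\{c\} \cup (A \cap \overline{X}),\, B \cap \overline{X}\bigr)$ is a bipartition of $G'$: edges within $\overline{X}$ already respect it, and every edge at $c$ lands in $B \cap \overline{X}$. Running the last two arguments with the shores swapped covers $\ContractXinGtoV{\overline{X}}{G}{c}$ as well, completing the proof.
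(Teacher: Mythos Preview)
Your argument is correct, and for bipartiteness it is essentially the paper's own proof: both count $|A\cap X|$ against $|B\cap X|$ to conclude that every edge of $\Cut{X}$ meets $X$ in the same colour class, so the contraction vertex can be placed in the opposite class. Your prose slightly overstates things when it says that bipartiteness ``hinges on'' shore-connectivity --- your bipartite argument never actually uses it --- but this is cosmetic.

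Where you diverge from the paper is in the treatment of planarity. The paper dismisses planarity (and connectivity) as ``easy using common methods'' and gives no details; you instead prove that each shore of a tight cut induces a connected subgraph, via the alternating-cycle exchange with $M_1\triangle M_2$, and then obtain $G'$ as a genuine minor of $G$. This is more than the paper does, and it is the honest route: contracting a disconnected vertex set need not preserve planarity, so some such fact is implicitly required. Your shore-connectivity argument is sound (the key point that $e_1\neq e_2$ because their $X$-endpoints lie in different components is handled correctly), and it is a standard lemma in matching theory that the paper could reasonably have cited but chose to suppress.
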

\begin{proof}
Proving that both planarity and connectivity are preserved through tight cut contractions is easy using common methods.
The proof that being bipartite is preserved however uses an argument which we will make use of frequently.

Let $G = (A \cup B, E(G))$ be a matching covered, bipartite graph, let $\Cut{X}$ be a tight cut in $G$, and let $M$ be a perfect matching of $G$, with $ab \in M \cap \Cut{X}$.
We can assume w.l.o.g.\ that $a \in X \cap A$ and note that $ab$ is the only edge of $M$ found in $\Cut{X}$, since it is tight.
Thus if $n = |X \cap A|$, only one the vertices from $X \cap A$ is matched with a vertex in $\overline{X}$ and thus $|X \cap B| = n - 1$.

Suppose there exists an edge $uv \in \Cut{X} \setminus \{ ab \}$ with $u \in (X \cap B)$ and let $M'$ be a perfect matching of $G$ such that $uv \in M'$.
It is now easy to observe that it is impossible for $M' \cap E(\InducedSubgraph{G}{X})$ to cover the $n$ vertices in $A$, as only $n-2$ vertices from $B$ can be matched with them.
Therefore all endpoints of edges from $\Cut{X}$ which lie in $X$ also lie in $A$.
Thus we can colour the contraction vertex with the colour of the vertices in $B$ if we contract $\overline{X}$ to construct $\ContractXinGtoV{\overline{X}}{G}{c}$.
An analogous argument also works for $\ContractXinGtoV{X}{G}{c}$.
\end{proof}

From the proof above, we extract the following statement.

\begin{lemma}\label{lem:tightcutcolour}
Let $G$ be a matching covered, bipartite graph and let $\Cut{X}$ be a tight cut in $G$.
All endpoints of the edges in $\Cut{X}$ which lie in $X$ belong to the same colour class.
\end{lemma}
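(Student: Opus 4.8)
The plan is to isolate the counting argument already carried out inside the proof of \Cref{lem:tightcut}. Write $G = (A \cup B, \E{G})$. Since $G$ is matching covered it has a perfect matching $M$, and since $\Cut{X}$ is tight there is a unique edge $ab \in M \cap \Cut{X}$; after possibly swapping the roles of $A$ and $B$ we may assume the endpoint of $ab$ that lies in $X$ belongs to $A$, say $a \in X \cap A$ and $b \in \overline{X} \cap B$. Set $n := |X \cap A|$. Because $ab$ is the only edge of $M$ with exactly one endpoint in $X$, the set $M \cap \E{\InducedSubgraph{G}{X}}$ is a perfect matching of $\InducedSubgraph{G}{X} - a$; comparing the two colour classes of this subgraph yields $|X \cap B| = n - 1$.

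Next I would rule out, by contradiction, the existence of an edge $uv \in \Cut{X}$ whose endpoint in $X$ lies in $B$, say $u \in X \cap B$. Since $G$ is matching covered (hence $1$-extendable), there is a perfect matching $M'$ of $G$ with $uv \in M'$. Tightness of $\Cut{X}$ forces $uv$ to be the only edge of $M'$ crossing the cut, so $M' \cap \E{\InducedSubgraph{G}{X}}$ is a perfect matching of $\InducedSubgraph{G}{X} - u$. But with $u \in X \cap B$ this subgraph has $n$ vertices in $A$ and only $n - 2$ in $B$, and a bipartite graph whose two colour classes differ in size admits no perfect matching --- a contradiction. Hence every edge of $\Cut{X}$ that has an endpoint in $X$ has that endpoint in $A$, which is exactly the assertion.

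I do not expect a genuine obstacle here, as the lemma is essentially a restatement of a step already established. The only points to watch are that ``matching covered'' is used twice in slightly different guises --- first merely to guarantee that some perfect matching $M$ exists, and then in full strength ($1$-extendability) to produce a perfect matching $M'$ through the prescribed edge $uv$ --- and that tightness of $\Cut{X}$ is likewise invoked twice, once to get $|M \cap \Cut{X}| = 1$ (needed for $|X \cap B| = n - 1$) and once to get $|M' \cap \Cut{X}| = 1$ (needed so that deleting $u$ still leaves a perfectly matchable induced subgraph). Finally, one should remark that the conclusion does not depend on which shore is named $X$: since $\Cut{X} = \Cut{\overline{X}}$, applying the same argument to $\overline{X}$ shows all $\overline{X}$-endpoints lie in $B$, consistent with the statement.
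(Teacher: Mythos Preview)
Your proposal is correct and is essentially identical to the paper's own argument: the lemma is explicitly presented there as an extraction from the proof of \Cref{lem:tightcut}, and you reproduce exactly that counting argument (first using a perfect matching $M$ to get $|X\cap B|=n-1$, then using $1$-extendability to find $M'$ through a hypothetical $B$-side edge and derive the contradiction). Your additional remarks on the two uses of ``matching covered'' and of tightness, and on the symmetry in $X$ versus $\overline{X}$, are accurate elaborations but not departures from the paper's approach.
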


In general, for any graph $G$ with a perfect matching, it suffices to consider the spanning subgraph of $G$ which contains only those edges which can be found in some perfect matching of $G$, at least when we are interested in the matching-theoretic properties of $G$.
We call this graph the \emph{cover graph of $G$}.
By repeatedly contracting non-trivial tight cuts in the cover graph of $G$ and applying the same procedure to the tight cut contractions produced this way, we perform a \emph{tight cut decomposition}.
This results in a collection of graphs which cannot be decomposed further, and thus have no non-trivial tight cuts.
These graphs are called \emph{braces}, if they are bipartite, and \emph{bricks}, if they are not bipartite.
The tight cut decomposition has the notable property that it always results in the same list of bricks and braces, no matter in which order the contractions are applied \cite{lovasz1987matching}.
For our purposes it will suffice to only consider braces, since we concentrate on bipartite graphs.
There is a more useful characterisation of braces using extendability, a proof for which can for example be found in \cite{lovasz2009matching}.

\begin{theorem}\label{thm:braceequiv}
A graph is a brace if and only if it is either isomorphic to $C_4$ or $2$-extendable.
\end{theorem}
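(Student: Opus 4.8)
The plan is to prove the two directions of \Cref{thm:braceequiv} separately. One direction is essentially the definition restricted to cubic-free settings: if a graph $G$ is isomorphic to $C_4$ or is bipartite and $2$-extendable, then by the definition given in the introduction $G$ is a brace, so nothing is to be done there — the content lies in the converse. So the substantive task is to show that every brace (defined as a bipartite graph, other than $P_4$, in which any two disjoint edges lie together in a perfect matching, equivalently a bipartite graph with no non-trivial tight cut in the sense used for the decomposition) is either $C_4$ or $2$-extendable, and that conversely being $C_4$ or $2$-extendable forces the ``no non-trivial tight cut'' property. Since the paper offers two slightly different-sounding descriptions of ``brace'' (the decomposition-terminal graphs with no non-trivial tight cut, and the ``any two disjoint edges extend'' description from the abstract), I would first reconcile these, and then cite the standard reference; concretely I would note that the equivalence is classical and appears in \cite{lovasz2009matching}, so the honest proof here is a careful unpacking of definitions plus an appeal to that source.

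\textbf{First} I would fix the setting: let $G=(A\cup B,E(G))$ be bipartite and matching covered (every edge lies in a perfect matching), since the tight-cut decomposition is only applied to cover graphs. A $2$-extendable graph has $|V(G)|\ge 6$ by definition, and a brace in the decomposition sense can also be $C_4$ (or, under the ``disjoint edges'' definition, one explicitly excludes $P_4$, which is why $C_4$ is the only small exceptional case surviving as a brace once $P_4$ is barred and connectivity plus matching-coveredness are imposed). \textbf{Then} for the main implication ``brace $\Rightarrow$ $C_4$ or $2$-extendable'': assume $G$ is matching covered, bipartite, not $C_4$, and has no non-trivial tight cut; I want to show every matching $F=\{e_1,e_2\}$ of size $2$ extends to a perfect matching. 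The key classical fact I would invoke (this is the brick-and-brace theory of Lov\'asz, also in \cite{lovasz1987matching}) is the characterisation of tight cuts in bipartite matching covered graphs: in a bipartite matching covered graph, the non-trivial tight cuts are exactly the ``barrier cuts'' arising from barriers (vertex sets $S$ for which $G-S$ has $|S|$ odd components after contracting, in the bipartite case controlled by the deficiency version of K\H{o}nig/Hall). Absence of such cuts is precisely what makes any two disjoint edges simultaneously extendable; \textbf{the main obstacle} is to carry out this barrier analysis cleanly — showing that if some pair $\{e_1,e_2\}$ fails to extend, then by a Hall-type / alternating-path argument one produces a vertex set whose boundary is a non-trivial tight cut, contradicting the hypothesis. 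This alternating-path argument (deleting $e_1,e_2$ and their endpoints, finding the obstruction to a perfect matching of the remainder, lifting it back to a tight cut) is the technical heart.

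\textbf{For the converse} ``$C_4$ or $2$-extendable $\Rightarrow$ brace'': $C_4$ has only trivial tight cuts, so it is terminal. If $G$ is bipartite and $2$-extendable, suppose for contradiction it has a non-trivial tight cut $\Cut{X}$ with $|X|,|\overline{X}|\ge 2$; by \Cref{lem:tightcutcolour} all endpoints of $\Cut{X}$ inside $X$ lie in one colour class, say $X$-side endpoints are in $A$. Matching-coveredness gives at least two edges in $\Cut{X}$ (a trivial cut is tight but non-trivial cuts of order $1$ would disconnect a matching covered graph), so pick disjoint edges $e,f\in\Cut{X}$; then any perfect matching containing both $e$ and $f$ meets $\Cut{X}$ in at least two edges, contradicting tightness. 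Hence no such cut exists and $G$ is a brace. I would remark that strictly one should also confirm $\Cut{X}$ of order $\ge 2$ does contain two disjoint edges (two edges of a cut sharing an endpoint would put two matching edges at one vertex, impossible, so any two distinct edges of a tight cut are automatically disjoint), which closes the argument. Overall the proof is short modulo the cited structural result, and I would present it as: trivial direction by definition, $C_4$ direction by inspection, $2$-extendable direction by the disjoint-edges-in-a-cut argument above, and the remaining implication by reference to \cite{lovasz2009matching} together with the barrier/alternating-path sketch.
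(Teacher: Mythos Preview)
The paper does not actually prove this theorem; it states the characterisation as known and points the reader to \cite{lovasz2009matching}. Your proposal therefore goes further than the paper, and for the harder implication (no non-trivial tight cut $\Rightarrow$ $C_4$ or $2$-extendable) you end up doing essentially the same thing as the paper, namely deferring to the cited structural theory.

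Your argument for the other direction (``$2$-extendable $\Rightarrow$ no non-trivial tight cut'') has the right shape but contains a genuine error. The parenthetical claim that ``any two distinct edges of a tight cut are automatically disjoint'' is simply false: two edges of $\Cut{X}$ can share an endpoint, and your justification (``two edges of a cut sharing an endpoint would put two matching edges at one vertex'') shows only that two non-disjoint edges cannot lie together in a single matching, not that they cannot both lie in the cut. The preceding remark, that a non-trivial cut of order $1$ ``would disconnect a matching covered graph'', is also not an argument---matching covered graphs are connected by definition, and a bridge does not by itself contradict matching coveredness.

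What you actually need is to \emph{produce} two disjoint edges inside $\Cut{X}$. One clean route: by \Cref{thm:exttocon} a $2$-extendable graph is $3$-connected, and by \Cref{lem:tightcutcolour} the $X$-side endpoints of $\Cut{X}$ all lie in one colour class, so they form a vertex cut separating $X\cap B$ from $\overline{X}$; hence there are at least three such endpoints, and symmetrically at least three on the $\overline{X}$-side. A K\H{o}nig-type argument on the bipartite graph of cut-edges then yields a matching of size two in $\Cut{X}$, after which your $2$-extendability contradiction goes through.
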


We now introduce an operation which can be seen as a reversed tight cut contraction.
Let $G_1$ and $G_2$ be two matching covered graphs with specified vertices $u \in V(G_1)$ and $v \in V(G_2)$, such that they are incident to the same number of edges, $V(G_1) \cap V(G_2) = \emptyset$, and there exists a bijection $\varphi : N_{G_1}(u) \rightarrow N_{G_2}(v)$.
We call $H$ a \emph{splice of $G_1$ and $G_2$ at $u$ and $v$}, if $H$ is the result of taking $(G_1 - u) + (G_2 - v)$ and adding the edges $\{ xy \mid x \in N_{G_1}(u) \text{ and } \varphi(x) = y \}$.
The cut $\CutG{H}{V(G_1) \setminus \{ u \}}$ is called a \emph{splicing cut}.

Since $G_1$ and $G_2$ are matching covered and the splicing cut $F$ corresponds to two trivial tight cuts, one in $G_1$ and the other in $G_2$, it is easy to see that $H$ is again matching covered and in particular for every edge $e$ in $H$ there exists a perfect matching of $H$ which contains $e$ and exactly one edge in $C$.
Additionally, the splice of two bipartite graphs is again bipartite.
Using these observations, a result by Carvalho et al.\ \cite{de2002conjecture} (see Corollary 2.22.) implies the following.

\begin{lemma}
    Let $H$ be the splice of $G_1$ and $G_2$ and let $F$ be the corresponding splicing cut in $H$.
    If $H$ is bipartite, then $F$ is tight.
\end{lemma}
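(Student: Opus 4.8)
The plan is to feed the splice $H$ into Corollary~2.22 of \cite{de2002conjecture}. Concretely, the result we want to invoke is the statement that in a bipartite matching covered graph a cut $C$ is tight as soon as every edge of the graph lies in a perfect matching meeting $C$ in exactly one edge. So the three things I need to supply are: (i) $H$ is matching covered, (ii) $H$ is bipartite, and (iii) every edge of $H$ lies in a perfect matching of $H$ meeting $F$ in exactly one edge. Item (ii) is exactly the hypothesis of the lemma, so the work is in (i) and (iii), both of which come from the behaviour of the two trivial tight cuts $\CutG{G_1}{\{u\}}$ and $\CutG{G_2}{\{v\}}$ under the splicing operation.

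For (i) and (iii) I would argue as follows. Writing $F=\{\,x\varphi(x):x\in N_{G_1}(u)\,\}$, these are the only edges of $H$ joining $V(G_1-u)$ to $V(G_2-v)$. Let $e$ be any edge of $H$. If $e$ lies in $G_1-u$, use that $G_1$ is matching covered to pick a perfect matching $M_1$ of $G_1$ with $e\in M_1$, and let $ux$ be the unique edge of $M_1$ incident with $u$; if instead $e=x\varphi(x)\in F$, take any perfect matching $M_1$ of $G_1$ with $ux\in M_1$. In either case pick a perfect matching $M_2$ of $G_2$ containing $v\varphi(x)$, which exists since $G_2$ is matching covered. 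Then $(M_1\setminus\{ux\})\cup(M_2\setminus\{v\varphi(x)\})\cup\{x\varphi(x)\}$ is a perfect matching of $H$ containing $e$ and meeting $F$ precisely in the edge $x\varphi(x)$; the symmetric construction handles edges lying in $G_2-v$. This exhibits, for every edge of $H$, a perfect matching through it, so $H$ is matching covered (it is also connected, since matching covered graphs are $2$-connected, so $G_1-u$ and $G_2-v$ are connected and $F\neq\emptyset$), and it gives (iii) at the same time.

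With (i), (ii), (iii) in hand, Corollary~2.22 of \cite{de2002conjecture} yields that $F$ is tight. I expect there to be no genuinely hard step here: the only points requiring care are getting the exact form of the cited corollary right and matching the two perfect matchings correctly across the splice — that is, making sure the bijection $\varphi$ is used to glue $M_1$ and $M_2$ so that the glued edges form a single edge of $F$. As a sanity check (and an alternative to quoting the corollary), one can observe that $\ContractXinGtoV{V(G_2)\setminus\{v\}}{H}{c}\cong G_1$ and $\ContractXinGtoV{V(G_1)\setminus\{u\}}{H}{c}\cong G_2$ — no parallel edges arise precisely because $\varphi$ is a bijection and all graphs are simple — so $F$ is a \emph{separating} cut, and separating cuts in bipartite matching covered graphs are tight.
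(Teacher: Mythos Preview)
Your proposal is correct and follows essentially the same route as the paper: the paragraph immediately preceding the lemma in the paper sketches exactly your items (i) and (iii)---that $H$ is matching covered and that every edge lies in a perfect matching meeting $F$ in a single edge---and then invokes Corollary~2.22 of \cite{de2002conjecture}. Your explicit construction of the glued perfect matching and the closing remark on separating cuts simply spell out details the paper leaves implicit.
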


Lastly, let us note two useful properties concerning $k$-extendable graphs.

\begin{theorem}[Plummer \cite{plummer1980n}]\label{thm:exttocon}
    If $G$ is $k$-extendable for some $k > 1$, then it is also $(k-1)$-extendable and $(k+1)$-connected.
\end{theorem}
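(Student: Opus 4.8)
The first assertion is immediate: if $G$ is $k$-extendable then it is connected, $|V(G)|\ge 2k+2\ge 2(k-1)+2$, and any matching of size at most $k-1$ is in particular a matching of size at most $k$, hence extends to a perfect matching; so $G$ is $(k-1)$-extendable. (We use $k>1$ so that the claim is non-vacuous.) Iterating the first assertion, $G$ is also $1$-extendable, so $G$ is connected and has a perfect matching, whence $|V(G)|$ is even; I will use this throughout the second part.

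For the second assertion, suppose for contradiction that $G$ is not $(k+1)$-connected. As $|V(G)|\ge 2k+2\ge k+2$, there is a vertex set $S$ with $|S|\le k$ whose removal disconnects $G$; pick such an $S$ of minimum size $\kappa:=|S|$ and let $H_1,\dots,H_t$ ($t\ge 2$) be the components of $G-S$. By minimality, every vertex of $S$ has a neighbour in every $H_i$ — otherwise deleting that vertex from $S$ would already leave $H_i$ separated from the rest. Set $H:=H_1$. The plan is to force a parity contradiction for $|V(H)|$ by exhibiting two matchings of size at most $\kappa\le k$ that, upon being extended to perfect matchings via $k$-extendability, make $|V(H)|$ even, respectively odd. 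The first matching $F_1$ saturates $S$ and uses no edge meeting $V(H)$; in any perfect matching $M_1\supseteq F_1$ no edge leaves $V(H)$ (its only outside neighbours lie in $S$, which $F_1$ matches into $V(G)\setminus V(H)$), so $M_1$ induces a perfect matching of $G[V(H)]$ and $|V(H)|$ is even. The second matching $F_2$ is obtained from $F_1$ by rerouting its edge $s_0x_0$ at some $s_0\in S$ to an edge $s_0v$ with $v\in V(H)$ — such a $v$ exists by the neighbour property and is untouched by $F_1$ — so that $|F_2|=|F_1|\le\kappa$, $F_2$ still saturates $S$, and in any perfect matching $M_2\supseteq F_2$ the only vertex of $S$ reaching into $V(H)$ is $s_0$, matched to $v$; hence $M_2$ induces a perfect matching of $G[V(H)\setminus\{v\}]$ and $|V(H)|$ is odd, a contradiction.

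The crux, and the step I expect to be the real obstacle, is producing $F_1$: one must show that $S$ can be saturated by a matching avoiding $V(H)$, equivalently that $S$ is matchable in $G-V(H)$. The natural route is first to choose the two-sided partition of $\{H_1,\dots,H_t\}$ so that the side not containing $H$ is large (possible since $|V(G)|\ge 2\kappa+2$ forces $\sum_i|V(H_i)|\ge\kappa+2$), and then to observe that any obstruction to such a matching is itself a small separator: if $T\subseteq S$ has a neighbourhood $W$ in $G-V(H)$ with $|W|<|T|$, then $(S\setminus T)\cup W$ separates $V(G)\setminus(V(H)\cup W)$ from $V(H)$ and has size $<\kappa$, contradicting the minimality of $S$. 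A small number of degenerate configurations — all components of $G-S$ being tiny, or every saturating matching of $S$ being forced to lie inside $S$ — escape this dichotomy and need separate short arguments; this bookkeeping is the technical heart of Plummer's original proof \cite{plummer1980n}, which one may alternatively just invoke.
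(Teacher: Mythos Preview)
The paper does not prove this result; it is quoted from Plummer~\cite{plummer1980n} and used as a black box, so there is nothing in the paper itself to compare your argument against.

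Your outline follows the classical route --- a minimum cut $S$, the parity of a component $H$ flipped by two small matchings saturating $S$ --- and the core mechanism is sound. But, as you yourself concede, the construction of $F_1$ is where the work is, and your Hall-type argument does not close. Two specific points. First, the separator $(S\setminus T)\cup W$ only disconnects $G$ when the non-$H$ side is not entirely contained in $W$; you claim this can be arranged because $\sum_i |V(H_i)|\ge \kappa+2$, but that inequality does not force either side of a bipartition of the components to have at least $\kappa$ vertices (e.g.\ two components of size $3$ when $\kappa=4$), so the ``other side minus $W$ is nonempty'' step is not yet justified. Second, even once $F_1$ exists, the rerouting step requires the partner $x_0$ of $s_0$ to lie outside $S$; if $F_1$ is merely a matching of $S$ in $G-V(H)$, then $x_0$ may be another vertex of $S$, and $F_2$ no longer saturates $S$. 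You do flag both pathologies as ``degenerate configurations'' needing ``separate short arguments'', but those arguments are exactly the substance of the theorem; ending by invoking Plummer's original paper makes what you have written an informed outline rather than a self-contained proof.
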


\begin{theorem}[Plummer \cite{plummer1986matching}]\label{thm:extequiv}
    A bipartite graph $G = (A \cup B, E(G))$ is $k$-extendable if and only if for all distinct $a_1, \ldots , a_k \in A$ and all distinct $b_1, \ldots , b_k \in B$ the graph $G - \{ a_1, \ldots , a_k, b_1, \ldots , b_k \}$ has a perfect matching.
\end{theorem}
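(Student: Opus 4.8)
The plan is to prove both implications via Hall's theorem, working throughout in the fixed bipartition $G = (A\cup B, E(G))$. First I would record the standing reductions. Since the empty matching must extend, a $k$-extendable graph has a perfect matching, so $|A| = |B| =: n$; likewise, if every $G - \{a_1,\ldots,a_k,b_1,\ldots,b_k\}$ has a perfect matching, then comparing the sizes of the two colour classes of such a subgraph forces $|A| = |B| = n$, and non-triviality of the statement requires $n \ge k+1$ (equivalently $|V(G)| \ge 2k+2$). Because the two colour classes always have equal size, a subgraph of the form $G - A_0 - B_0$ (with $A_0 \subseteq A$, $B_0\subseteq B$, $|A_0|=|B_0|$) has a perfect matching if and only if the defect version of Hall's condition holds from the $A$-side, i.e.\ $|N(X)\setminus B_0| \ge |X|$ for all $X\subseteq A\setminus A_0$. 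This is essentially the only tool I would use on both sides.

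For the direction ``deletion property $\Rightarrow$ $k$-extendable'', I would first observe that the hypothesis already forces minimum degree at least $k+1$: if some $b^*\in B$ had $\deg(b^*)\le k$, I could pick a $k$-set $A_0\supseteq N(b^*)$ and a $k$-set $B_0\subseteq B\setminus\{b^*\}$ (using $n\ge k+1$), leaving $b^*$ isolated in $G - A_0 - B_0$, a contradiction. Next I would prove that the deletion property for exactly $k$ implies it for every $m\le k$, by a downward induction whose step passes from $k$ to $k-1$: given a would-be counterexample $G-A_0'-B_0'$ with $|A_0'|=|B_0'|=k-1$ and no perfect matching, Hall yields a violator $X\subseteq A\setminus A_0'$ with $|N(X)\setminus B_0'|\le |X|-1$; I would then adjoin one further deleted vertex to each side, namely a vertex of $A$ outside $X\cup A_0'$ and a vertex of $B$ outside $N(X)\cup B_0'$, to manufacture a $k$-deletion that still violates Hall, contradicting the hypothesis. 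The only case in which no suitable extra $A$-vertex exists is $X = A\setminus A_0'$, but then the ``missing'' $B$-vertex has all its neighbours inside $A_0'$ and hence degree at most $k-1$, excluded by the degree bound. With the $m$-deletion property in hand, $k$-extendability is immediate: a matching $F$ of size $m\le k$ covers an $m$-set in each colour class, the complementary graph has a perfect matching $M'$, and $F\cup M'$ extends $F$.

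For the converse, ``$k$-extendable $\Rightarrow$ deletion property'', I would argue by contraposition. Suppose $G - A_0 - B_0$ has no perfect matching; by Hall there is $X\subseteq A\setminus A_0$ with $|N(X)\setminus B_0|\le |X|-1$. The goal is to produce a matching $F$ with $|F| \le k$ that extends to no perfect matching of $G$, contradicting extendability. The natural candidate is a matching $F$ of size $|N(X)\cap B_0|\le k$ saturating $N(X)\cap B_0$ while avoiding every vertex of $X$: for such an $F$ the set $X$ survives in $G - V(F)$ with $N_{G-V(F)}(X)\subseteq N(X)\setminus B_0$, so $X$ violates Hall and $F$ cannot be extended. (If $N(X)\cap B_0=\emptyset$, then $X$ already violates Hall in $G$, contradicting the existence of a perfect matching.)

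The main obstacle is precisely the existence of this blocking matching $F$: a priori the $B_0$-neighbours of $X$ might only be matchable into $X$ itself. I would resolve this with K\H{o}nig's min--max theorem applied to the bipartite graph between $N(X)\cap B_0$ and $A\setminus X$: a deficiency there produces a second Hall-type violator on the $B$-side, which can be folded back into a single deletion set, and the slack needed to push this through is supplied by the minimum-degree bound $\delta(G)\ge k+1$, which follows from the $(k+1)$-connectivity in \Cref{thm:exttocon}. I expect this bookkeeping, rather than any single clever idea, to be the most delicate part of the argument; the degenerate small cases ($k\le 1$, or $n$ close to $k$) I would check directly.
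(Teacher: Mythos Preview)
The paper does not prove \Cref{thm:extequiv}; it is quoted without proof as a result of Plummer. So there is nothing to compare your argument against, and I will simply assess your sketch on its own terms.

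Your backward direction (deletion property $\Rightarrow$ $k$-extendable) is sound: the minimum-degree observation, the downward induction on the number of deleted pairs, and the final extension step all go through as you describe. One small omission is that the paper's definition of $k$-extendable includes connectivity; you should note that the deletion hypothesis forces $G$ to be connected (a disconnected $G$ with balanced components fails the deletion test once you take the $k$ deleted $A$-vertices from one component and the $k$ deleted $B$-vertices from another).

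Your forward direction, however, has a real gap at exactly the point you flag. You want a matching $F$ that saturates $Y=N(X)\cap B_0$ into $A\setminus X$ and, if none exists, you propose to extract a contradiction from K\H{o}nig plus the minimum-degree bound. This does not work as stated: a small vertex cover of the bipartite graph between $Y$ and $A\setminus X$ is \emph{not} a separator of $G$, because $X$ still has neighbours in $N(X)\setminus B_0$, so nothing cuts $X$ off from the rest of the graph. Minimum degree alone gives you no leverage here. The clean fix is to ask for less and use more: look instead for a matching of size $|N(X)|-|X|+1$ from \emph{all of} $N(X)$ into $A\setminus X$ (note $|N(X)|-|X|+1\le|Y|\le k$). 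Any such matching still blocks every perfect matching of $G$, since in a perfect matching at most $|N(X)|-|X|$ vertices of $N(X)$ can be matched outside $X$. If no such matching exists, K\H{o}nig yields a vertex cover $C$ of size at most $|N(X)|-|X|\le k-1$ of the bipartite graph between $N(X)$ and $A\setminus X$, and now $C$ \emph{is} a genuine separator of $G$ (it disconnects $X\cup(N(X)\setminus C)$ from $B\setminus N(X)\neq\emptyset$), contradicting the full $(k+1)$-connectivity from \Cref{thm:exttocon}, not merely the degree bound.
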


\section{A Note on Hamiltonian cycles in general graphs}\label{sec:note}

We first formalise an observation concerning the interaction of tight cuts and conformal cycles.
%A cycle $C$ in a graph \emph{crosses} a cut $\Cut{X}$ if $\E{C} \cap \Cut{X} \neq \emptyset$.

\begin{lemma} \label{lem:cyccross}
Let $G$ be a matching covered graph with a tight cut $\Cut{X}$ and a perfect matching $M$ of $G$. For every $M$-conformal cycle $C$ with $E(C) \cap \Cut{X} \neq \emptyset$, we have $|\E{C} \cap \Cut{X}| = 2$. 
\end{lemma}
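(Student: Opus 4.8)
The plan is to use the two defining features at hand: a tight cut is met in exactly one edge by every perfect matching of $G$, and an $M$-conformal cycle $C$ leaves a perfect matching on $\V{G}\setminus\V{C}$ that can be recombined with either of the two perfect matchings of $C$ to yield a perfect matching of $G$.

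First I would record the standard fact that any cycle crosses any edge cut an even number of times, so $|\E{C}\cap\Cut{X}|$ is even; since it is nonzero by hypothesis, it is at least $2$, and it remains to show it is at most $2$. Next, since $C$ is $M$-conformal, every edge of $M$ lies entirely inside $\V{C}$ or entirely inside $\V{G}\setminus\V{C}$. Writing $M_{\mathrm{out}}$ for the edges of $M$ of the latter type, the fact that $M$ covers $\V{G}$ forces $M_{\mathrm{out}}$ to be a perfect matching of $G-\V{C}$ (and $M\setminus M_{\mathrm{out}}$ to be a perfect matching of $\InducedSubgraph{G}{\V{C}}$, so that $C$ is an even cycle). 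As an even cycle, $C$ has exactly two perfect matchings $N_1,N_2$, and these satisfy $N_1\sqcup N_2=\E{C}$.

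Then $M_{\mathrm{out}}\cup N_1$ and $M_{\mathrm{out}}\cup N_2$ are perfect matchings of $G$, since $M_{\mathrm{out}}$ covers $\V{G}\setminus\V{C}$, each $N_i$ covers $\V{C}$, and the edge sets $M_{\mathrm{out}}$ and $N_i$ are disjoint. Applying tightness of $\Cut{X}$ to each of these two perfect matchings, and using $M_{\mathrm{out}}\cap N_i=\emptyset$, gives $|M_{\mathrm{out}}\cap\Cut{X}|+|N_i\cap\Cut{X}|=1$ for $i=1,2$. In particular $|N_i\cap\Cut{X}|\le 1$ for both $i$, so $|\E{C}\cap\Cut{X}|=|N_1\cap\Cut{X}|+|N_2\cap\Cut{X}|\le 2$, which together with the lower bound yields equality.

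The argument is short, and I do not expect a genuine obstacle; the one point needing care is the appeal to $M$-conformality to see that $M_{\mathrm{out}}$ is really a perfect matching of $G-\V{C}$, and the bookkeeping observation that $N_1\sqcup N_2=\E{C}$ is exactly what allows the two instances of tightness to be summed into a bound on $|\E{C}\cap\Cut{X}|$.
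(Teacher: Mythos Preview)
Your proof is correct and is essentially the same argument as the paper's, just spelled out more explicitly: the paper phrases it as taking the symmetric difference $(M\setminus E(C))\cup(E(C)\setminus M)$ to obtain a second perfect matching, and your pair $M_{\mathrm{out}}\cup N_1$, $M_{\mathrm{out}}\cup N_2$ is exactly this matching together with $M$ itself. The summation $|N_1\cap\Cut{X}|+|N_2\cap\Cut{X}|\le 2$ is the same counting that the paper leaves implicit.
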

\begin{proof}
    Clearly, since $C$ is a cycle, we have $|\E{C} \cap \Cut{X}| \geq 2$.
    We can assume towards a contradiction that $|\E{C} \cap \Cut{X}| > 2$.
    Since $C$ is even, the set $(M \setminus E(C)) \cup (E(C) \setminus M)$ is a perfect matching, which contradicts the tightness of $\Cut{X}$.
\end{proof}

Though the consequence of this lemma for Hamiltonian graphs is easy to deduce, to the best of the authors' knowledge it has gone unstated so far.
The following immediately implies \Cref{thm:tightcutham}.

\begin{lemma}\label{lem:tightcutham}
	Any tight cut contraction of a the cover graph of a Hamiltonian graph with an even number of vertices is Hamiltonian.
\end{lemma}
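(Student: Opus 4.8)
The plan is to take a Hamiltonian graph $G$ with an even number of vertices, pass to its cover graph $H$ (which contains the Hamiltonian cycle, since every edge of a Hamiltonian cycle on an even number of vertices lies in a perfect matching, by the Observation already recorded), and then track what happens to a fixed Hamiltonian cycle $C$ of $H$ under a single tight cut contraction $H \to H/(\overline{X} \to c)$. By the Observation, $C$ decomposes into two perfect matchings $M_1, M_2$ of $H$, so $C$ is $M_1$-conformal; hence \Cref{lem:cyccross} applies and tells us $|E(C) \cap \Cut{X}| = 2$ (the case $E(C) \cap \Cut{X} = \emptyset$ is impossible since $C$ is spanning and both shores are nonempty).

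First I would write $E(C) \cap \Cut{X} = \{e_1, e_2\}$ with $e_i = x_i y_i$, $x_i \in X$, $y_i \in \overline{X}$. Removing $e_1, e_2$ from $C$ splits it into two paths, one $P_X$ with all internal vertices in $X$ and endpoints $x_1, x_2$, and one $P_{\overline{X}}$ with all internal vertices in $\overline{X}$ and endpoints $y_1, y_2$; together with $e_1, e_2$ these four pieces form $C$, and because $C$ is spanning, $P_X$ visits every vertex of $X$ and $P_{\overline{X}}$ visits every vertex of $\overline{X}$. Now contract $\overline{X}$ to $c$: the image of $P_{\overline{X}}$ becomes the single vertex $c$, the images of $e_1$ and $e_2$ become the edges $x_1 c$ and $x_2 c$ in the contraction (these survive the contraction up to parallel-edge deletion — and even if $x_1 = x_2$ were a worry, it is not, since $x_1,x_2$ are the two endpoints of the path $P_X$ and $P_X$ has at least one edge because $|X|\ge 2$, noting the contraction is only performed for non-trivial tight cuts), and the image of $P_X$ is an $x_1$–$x_2$ path through all vertices of $X$. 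Concatenating, the image of $C$ is a spanning cycle of $H/(\overline{X}\to c)$: it runs along $P_X$ from $x_1$ to $x_2$, then $x_2 c$, then $c x_1$, closing up, and it visits $c$ plus every vertex of $X$, i.e. all of $V(H/(\overline{X}\to c))$.

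The only mildly delicate point is that contracting $\overline{X}$ can identify edges or create loops, and one must be sure the cycle image stays a genuine cycle rather than degenerating; I expect this to be the main (though still routine) obstacle. The key facts that rescue it are that $\Cut{X}$ is tight (so exactly the two edges $e_1, e_2$ of $C$ cross, and they cannot be the same edge) and that the contracted side $\overline{X}$ is nontrivial while $P_{\overline{X}}$ is a single path, so its whole image collapses to the one new vertex $c$ without the cycle self-intersecting. Finally, applying this to every tight cut contraction in a tight cut decomposition — which by \Cref{lem:tightcut} stays bipartite, connected, etc. — and recalling that the braces of $H$ (equivalently of $G$, since passing to the cover graph does not change the list of bricks and braces) are exactly the terminal graphs of this decomposition, gives \Cref{thm:tightcutham} as an immediate corollary.
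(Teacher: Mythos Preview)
Your proposal is correct and follows essentially the same approach as the paper: decompose the Hamiltonian cycle $C$ into two perfect matchings, invoke \Cref{lem:cyccross} to get $|E(C)\cap\Cut{X}|=2$, and observe that contracting one shore turns $C$ into a Hamiltonian cycle of the contraction. The paper's proof is terser (it simply says ``clearly $C' = C/(X\to c)$ is a Hamiltonian cycle of $G'$''), whereas you spell out the path decomposition $P_X, P_{\overline{X}}, e_1, e_2$ and the parallel-edge/loop check, but the logical content is the same.
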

\begin{proof}
	Let $G$ be an even graph, $\Cut{X}$ be a tight cut in $G$, and $C$ be a Hamiltonian cycle in $G$.
	It is easy to see that $E(C)$ contains two disjoint perfect matchings $M$ and $M'$ of $G$, making $C$ an $M$-conformal cycle. 
	Using \Cref{lem:cyccross}, we thus know that $|\E{C} \cap \Cut{X}| = 2$.
	Set $G' = \ContractXinGtoV{X}{G}{c}$ and $C' = \ContractXinGtoV{X}{C}{c}$, then clearly $C'$ is a Hamiltonian cycle of $G'$.
\end{proof}

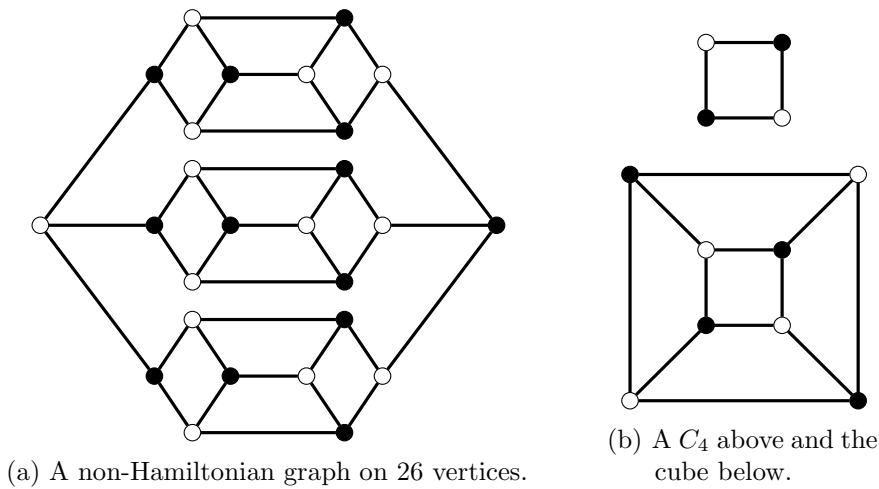
\begin{figure}[ht]
     \centering
     \begin{subfigure}[b]{0.45\textwidth}
         \centering
         			\begin{tikzpicture}
			
			\node (A) at (0,2.75) [draw, circle, scale=0.6] {};
			\node (B) at (6,2.75) [draw, circle, scale=0.6,fill] {};
			
			\foreach\i in {0, 2, 4}
			{
				\node (\i0) at (1.5,\i+0.75) [draw, circle, scale=0.6, fill] {};
			    \node (\i1) at (2,\i+1.5) [draw, circle, scale=0.6] {};
			    \node (\i2) at (2,\i) [draw, circle, scale=0.6] {};
			    \node (\i3) at (2.5,\i+0.75) [draw, circle, scale=0.6, fill] {};
		    	\node (\i4) at (3.5,\i+0.75) [draw, circle, scale=0.6] {};
			    \node (\i5) at (4,\i+1.5) [draw, circle, scale=0.6, fill] {};
		    	\node (\i6) at (4,\i) [draw, circle, scale=0.6, fill] {};
			    \node (\i7) at (4.5,\i+0.75) [draw, circle, scale=0.6] {};
		    	
		    	\path
                    (A) edge[very thick] (\i0)
                    (\i0) edge[very thick] (\i1)
                    (\i0) edge[very thick] (\i2)
                    (\i1) edge[very thick] (\i3)
                    (\i1) edge[very thick] (\i5)
                    (\i2) edge[very thick] (\i3)
                    (\i2) edge[very thick] (\i6)
                    (\i3) edge[very thick] (\i4)
                    (\i4) edge[very thick] (\i5)
                    (\i4) edge[very thick] (\i6)
                    (\i5) edge[very thick] (\i7)
                    (\i6) edge[very thick] (\i7)
                    (\i7) edge[very thick] (B)
			    ;
			}
			\end{tikzpicture}
         \caption{A non-Hamiltonian graph on 26 vertices.}
         \label{fig:2conexample}
     \end{subfigure}
     \qquad
     \begin{subfigure}[b]{0.22\textwidth}
         \centering
         			\begin{tikzpicture}
			
			\node (V1) at (0,0) [draw, circle, scale=0.6] {};
			\node (V2) at (3,0) [draw, circle, scale=0.6, fill] {};
			\node (V3) at (3,3) [draw, circle, scale=0.6] {};
			\node (V4) at (0,3) [draw, circle, scale=0.6, fill] {};
			
			\node (U1) at (1,1) [draw, circle, scale=0.6, fill] {};
			\node (U2) at (2,1) [draw, circle, scale=0.6] {};
			\node (U3) at (2,2) [draw, circle, scale=0.6, fill] {};
			\node (U4) at (1,2) [draw, circle, scale=0.6] {};
			
			\path
			(V1) edge[very thick] (V2)
			(V2) edge[very thick] (V3)
			(V3) edge[very thick] (V4)
			(V4) edge[very thick] (V1)
			(U1) edge[very thick] (U2)
			(U2) edge[very thick] (U3)
			(U3) edge[very thick] (U4)
			(U4) edge[very thick] (U1)
			(V1) edge[very thick] (U1)
			(V2) edge[very thick] (U2)
			(V3) edge[very thick] (U3)
			(V4) edge[very thick] (U4)
			;
			
			\node (W1) at (1,3.75) [draw, circle, scale=0.6, fill] {};
			\node (W2) at (2,3.75) [draw, circle, scale=0.6] {};
			\node (W3) at (2,4.75) [draw, circle, scale=0.6, fill] {};
			\node (W4) at (1,4.75) [draw, circle, scale=0.6] {};
			
			\path
			(W1) edge[very thick] (W2)
			(W2) edge[very thick] (W3)
			(W3) edge[very thick] (W4)
			(W4) edge[very thick] (W1)
			;
			
			\end{tikzpicture}
         \caption{A $C_4$ above and the cube below.}
         \label{fig:cube}
     \end{subfigure}
     \caption{An example of a non-Hamiltonian graph with Hamiltonian braces.}
\end{figure}

Of course the reverse direction is more interesting to us, since it would allow us to construct Hamiltonian graphs using splices, but it does not hold in general.
One example for this is the smallest cubic, 2-connected, planar, bipartite graph not containing a Hamiltonian cycle found by Asano et al.\ \cite{asano1982smallest}, which we depict in \Cref{fig:2conexample} together with the braces found in its tight cut decomposition in \Cref{fig:cube}.
Clearly, both braces are Hamiltonian.

% \section{Hamiltonicity in $k$-regular, $k$-connected, bipartite graphs}\label{sec:kregkcon}

% \input{barnettematching}

\section{Hamiltonicity in cubic, bipartite, Pfaffian graphs}\label{sec:pfaffian}

% !TeX spellcheck = en_GB

To prove our main theorem, we will need to discuss tight cuts in cubic, bipartite graphs a bit more.
Both lemmas that follow can be found in Section 7 of \cite{mccuaig2000even}.
The first lemma can be generalised to arbitrary $k$-regular graphs via a simple application of Menger's theorem, though the forward direction only holds when $k$ is odd.

\begin{lemma}[McCuaig \cite{mccuaig2000even}]\label{lem:tightcutdisjoint}
    Let $G$ be a cubic, $3$-connected, bipartite graph.
    A non-trivial cut in $G$ contains three mutually disjoint, pairwise non-adjacent edges if and only if it is tight.
\end{lemma}

\begin{lemma}[McCuaig \cite{mccuaig2000even}]\label{lem:tightcutpreserve}
    Let $G$ be a cubic, $3$-connected, bipartite graph with a non-trivial tight cut $\Cut{X}$.
    Then both ${\ContractXinGtoV{X}{G}{c}}$ and ${\ContractXinGtoV{\overline{X}}{G}{c}}$ are cubic, $3$-connected, and bipartite.
\end{lemma}

The following strengthenings of \Cref{con:barnette} will prove central to our efforts.

\begin{theorem}[Hertel \cite{hertel2005survey} (corollary of a result by Kelmans \cite{kelmans1986konstruktsii})]\label{thm:equivkelmans}
    \Cref{con:barnette} holds if and only if all graphs in $\Barn$ have the $H^-$-property.
\end{theorem}

\begin{theorem}[Hertel \cite{hertel2005survey}]\label{thm:equivhertel}
\Cref{con:barnette} holds if and only if all graphs in $\Barn$ are $P_4$-Hamiltonian.
\end{theorem}

It should be noted that for the cubic, bipartite graphs the $H^-$-property is equivalent to $P_3$-Hamiltonicity.
We are now ready to prove the first major lemma of this section.

\begin{lemma}\label{lem:tightcutproperty}
    Let $G$ be a cubic, $3$-connected, bipartite graph, with a non-trivial tight cut $\Cut{X}$, and let $G_1 = {\ContractXinGtoV{X}{G}{c}}$ and $G_2 = {\ContractXinGtoV{\overline{X}}{G}{c}}$ be the two tight cut contractions belonging to $\Cut{X}$.
    The following hold
    \begin{enumerate}
        \item If $G_1$ and $G_2$ are $P_4$-Hamiltonian, then $G$ is $P_4$-Hamiltonian.
    
        \item $G$ has the $H^-$-property if and only if $G_1$ and $G_2$ have the $H^-$-property.
        
        \item $G$ is $P_3$-Hamiltonian if and only if $G_1$ and $G_2$ are $P_3$-Hamiltonian.
        
        \item $G$ has the $H^{+-}$-property if and only if $G_1$ and $G_2$ have the $H^{+-}$-property.
    \end{enumerate}
\end{lemma}
\begin{proof}
    Let $E_c^i$ be the set of edges in $G_i$ which are incident with $c$.
    There are three edges in $\Cut{X}$ and these are contracted into three edges in $E_c^i$ for each $i \in \{ 1, 2 \}$.
    We say that an edge $e \in E_c^i$, for $i \in \{ 1, 2 \}$, \emph{corresponds} to the edge $f \in \Cut{X}$, if $e \cap f \neq \emptyset$.
    
    Towards the first item, suppose that $G_1$ and $G_2$ are $P_4$-Hamiltonian.
    Let $P \subseteq G$ be a path of length three in $G$.
    Suppose first that $P \subseteq G_1$ and let $H_1$ be a Hamiltonian cycle of $G_1$ with $P \subseteq G_1$.
    Let $e, f \in \Cut{X}$ be the two edges in $G$ which correspond to the two edges in $E(H) \cap E_c^1$ and let $e',f' \in E_c^2$ be the two edges in $G_2$ which correspond to $e$ and $f$ respectively.
    
    Clearly, $e'$ and $f'$ together form a path of length two and since $G_2$ is $P_4$-Hamiltonian, there therefore exists a Hamiltonian cycle $H_2$ in $G_2$ which uses both $e'$ and $f'$.
    In $G$ we can now find the Hamiltonian cycle $( ( H_1 - c ) + (H_2 - c) ) + \{ e, f \}$ which uses $P$.
    Of course an analogous construction can be carried out if $P \subseteq G_2$.
    
    If $P$ is neither contained in $G_1$ nor in $G_2$ it must use at least one edge of $e \in \Cut{X}$, with $e_i$ being the edge in $G_i$ corresponding to $e$, for $i \in \{ 1, 2 \}$.
    We note that due to the edges in $\Cut{X}$ being mutually disjoint and pairwise non-adjacent (see \Cref{lem:tightcutdisjoint}), $P$ cannot use a second edge of $\Cut{X}$.
    Let us further suppose that $P$ uses one edge $f_1$ of $E(\InducedSubgraph{G}{\overline{X}})$ and one edge $f_2$ of $E(\InducedSubgraph{G}{X})$ each.
    For our construction, we also choose an edge $g \in \Cut{X} \setminus \{ e \}$ and the corresponding edges $g_1$ and $g_2$ in $G_1$ and $G_2$.
    Since both $G_1$ and $G_2$ are $P_4$-Hamiltonian, we can now choose a Hamiltonian cycle $H_i$ in $G_i$ containing the edges $e_i$ and $g_i$ for each $i \in \{ 1, 2 \}$.
    As in the previous case, this lets us construct a Hamiltonian cycle $((H_1 - c) + (H_2 - c)) + \{ g , e \} $ for $G$ which uses $P$.
    
    Finally, we need to consider the case in which two of the edges of $P$ lie in $G_1$, or $G_2$ respectively.
    W.l.o.g.\ we can assume that $|E(P) \cap E(G_1)| = 2$ and let $e \in \Cut{X} \cap E(P)$ again be the edge of the path which lies in the tight cut, with $e_i$ being the edge in $G_i$ corresponding to $e$, for $i \in \{ 1, 2 \}$.
    We choose a Hamiltonian cycle $H_1$ in $G_1$ which uses the three edges in $(E(P) \cap E(G_1)) \cup \{ e_1 \}$, which form a path.
    Let $f$ be the edge in $H_1$ which is incident with $c$ that is distinct from $e$ and let $f'$ be the corresponding edge in $G$, with $f''$ being the edge corresponding to $f'$ in $G_2$.
    This allows us to choose a Hamiltonian cycle $H_2$ in $G_2$ which uses the edges $e_2$ and $f''$.
    A Hamiltonian cycle in $G$ which contains $P$ can then be constructed using the following expression $( (H_1 - c) + (H_2 - c) ) + \{ f', e \}$.
    
    Now we can move on to considering the second point of the statement.
    For the forward direction, we assume that $G$ has the $H^-$-property.
    Let $e \in E(G_1)$ be an edge and let us first suppose that it is not incident to $c$.
    In $G$ there exists a Hamiltonian cycle $H$ that avoids $e$.
    Clearly, ${\ContractXinGtoV{X}{H}{c}}$ is a Hamiltonian cycle of $G_1$, which still avoids $e$.
    If on the other hand $e \in E(G_1)$ is incident to $c$, then let $e' \in \Cut{X}$ be the corresponding edge in $G$.
    We can now proceed exactly as in the previous case.
    Of course, these arguments also work for any edge in $E(G_2)$.
    
    We can now concern ourselves with the backward direction and assume that both $G_1$ and $G_2$ have the $H^-$-property.
    Let $e \in E(G)$ be an edge which does not lie in $\Cut{X}$.
    W.l.o.g.\ we can assume that $e \in E(G_1)$.
    Let $H_1$ be a Hamiltonian cycle in $G_1$ which avoids $e$ and let $f,g \in E(H_1)$ be the two edges incident with $c$, let $f',g' \in \Cut{X}$ be the corresponding edges in $G$, and let $f'',g'' \in E(G_2)$ be the corresponding edges to $f'$ and $g'$ in $G_2$.
    We can now choose a Hamiltonian cycle $H_2$ in $G_2$ which avoids the edge incident with $c$ which is neither $f''$ nor $g''$.
    Since $G_2$ is cubic, by \Cref{lem:tightcutpreserve}, $H_2$ must therefore use $f''$ and $g''$, which allows us to build the Hamiltonian cycle $((H_1 - c) + (H_2 - c)) + \{ f', g' \}$ for $G$.
    
    In the final case, we have $e \in E(G) \cap \Cut{X}$.
    Let $f,g \in (\Cut{X} \setminus \{ e \})$ be the two remaining edges in $\Cut{X}$ and let $e_i,f_i,g_i$ be the corresponding edges in $G_i$ for $i \in \{ 1,2 \}$.
    For both $i \in \{ 1, 2 \}$, we can now choose a Hamiltonian cycle $H_i$ in $G_i$ which avoids $e_i$ and this allows us to construct the Hamiltonian cycle $((H_1 - c) + (H_2 - c)) + \{ f, g \}$ for $G$ which avoids $e$.
    
    For the third point of the statement, we only need to observe that a cubic, $3$-connected, bipartite graph is $P_3$-Hamiltonian if and only if it has the $H^-$-property.
    This has been noted in the literature before (see \cite{hertel2005survey}).
    
    For the forward direction of the fourth point, we assume that $G$ has the $H^{+-}$-property.
    Let $e, f \in E(G_1)$ be two distinct edges.
    We will want to include $e$ and avoid $f$.
    Clearly, as in the previous points, we can simply find an appropriate Hamiltonian cycle in $G$ using $e$, or its corresponding edge, and avoiding $f$, or its corresponding edge, and then contract it to fit $G_1$.
    This of course also works for any two edges in $G_2$.
    
    For the backward direction, assume that $G_1$ and $G_2$ have the $H^{+-}$-property.
    Again, we let $e, f \in E(G)$ be two distinct edges and our goal is to include $e$ and avoid $f$ in a Hamiltonian cycle of $G$.
    If $e,f$ or their corresponding edges are both found in $G_1$ or $G_2$ respectively, we can find an appropriate Hamiltonian cycle $H$ in $G_1$ or $G_2$ respectively.
    Then we force a Hamiltonian cycle which does not use the edge incident to $c$ whose corresponding edge is not found in $H$.
    This then allows us to combine these two cycles into a Hamiltonian cycle in $G$ which uses $e$ and avoids $f$.
    
    If this is not the case, then either $e$ or $f$ are exclusively to one of the two tight cut contractions.
    W.l.o.g.\ let us say that $e$ or its corresponding edge is found in $G_1$ and $f$ or its corresponding edge is found in $G_2$.
    In this case we choose a Hamiltonian cycle $H_2$ in $G_2$ which avoids $f$, or its corresponding edge, and then choose a Hamiltonian cycle $H_1$ in $G_1$ which uses $e$, or its corresponding edge, and avoids the edge in $E_c^1$ whose corresponding edge is not used in $H_2$.
    This again allows us to combine $H_1$ and $H_2$ appropriately.
\end{proof}

This result together with \Cref{lem:tightcutpreserve} implies \Cref{thm:mainthmone}.
The $H^{+-}$-property is relevant to the contents of \Cref{sec:generation}, but will not be needed in this section.
We note that for $P_2$-Hamiltonicity the proof idea we presented fails for a statement analogous to those in \Cref{lem:tightcutproperty}, since we need to be able to dictate where the Hamiltonian cycle leaves and where it enters.
It also does not work for $P_5$-Hamiltonicity or higher, as this involves paths which can enter and leave the cut $\Cut{X}$ in $G$ without being part of a Hamiltonian cycle themselves.
In particular, some of the graphs in $\Barn$ are not $P_5$-Hamiltonian for this reason (see \Cref{fig:p5exmp}).

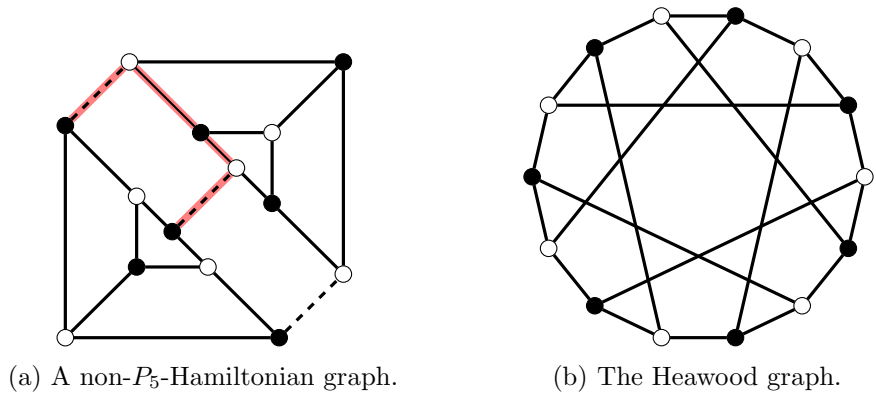
\begin{figure}[ht]
     \centering
     \begin{subfigure}[b]{0.35\textwidth}
         \centering
         			\begin{tikzpicture}[scale=1.25]
			
			\node (V1) at (0,0) [draw, circle, scale=0.6] {};
			\node (V2) at (2.25,0) [draw, circle, scale=0.6, fill] {};
			\node (V4) at (0,2.25) [draw, circle, scale=0.6, fill] {};
			
			\node (U1) at (0.75,0.75) [draw, circle, scale=0.6, fill] {};
			\node (U2) at (1.5,0.75) [draw, circle, scale=0.6] {};
			\node (U3) at (1.125,1.125) [draw, circle, scale=0.6, fill] {};
			\node (U4) at (0.75,1.5) [draw, circle, scale=0.6] {};
			
			\node (W1) at (0.675,2.925) [draw, circle, scale=0.6] {};
			\node (W2) at (2.925,2.925) [draw, circle, scale=0.6, fill] {};
			\node (W4) at (2.925,0.675) [draw, circle, scale=0.6] {};
			
			\node (A1) at (2.175,2.175) [draw, circle, scale=0.6] {};
			\node (A2) at (2.175,1.425) [draw, circle, scale=0.6, fill] {};
			\node (A3) at (1.8,1.8) [draw, circle, scale=0.6] {};
			\node (A4) at (1.425,2.175) [draw, circle, scale=0.6,fill] {};
			
			\path
				(U3) edge[red, line width=3pt, opacity=0.5] (A3)
				(A3) edge[red, line width=3pt, opacity=0.5] (A4)
				(A4) edge[red, line width=3pt, opacity=0.5] (W1)
				(W1) edge[red, line width=3pt, opacity=0.5] (V4)
			;
			
			\path
			(V1) edge[very thick] (V2)
			(V4) edge[very thick] (V1)
			(U1) edge[very thick] (U2)
			(U2) edge[very thick] (U3)
			(U3) edge[very thick] (U4)
			(U4) edge[very thick] (U1)
			(V1) edge[very thick] (U1)
			(V2) edge[very thick] (U2)
			(V4) edge[very thick] (U4)
			;
			
			\path
			(W1) edge[very thick] (W2)
			(A4) edge[thick] (W1)
			(W2) edge[very thick] (W4)
			(A2) edge[very thick] (A3)
			(A3) edge[thick] (A4)
			(A4) edge[very thick] (A1)
			(A2) edge[very thick] (A1)
			(W2) edge[very thick] (A1)
			(W4) edge[very thick] (A2)
			;
			
			\path
			(V2) edge[very thick, dashed] (W4)
			(V4) edge[very thick, dashed] (W1)
			(U3) edge[very thick, dashed] (A3)
			;
			
			\end{tikzpicture}
         \caption{A non-$P_5$-Hamiltonian graph.}
         \label{fig:p5exmp}
     \end{subfigure}
     \qquad
     \begin{subfigure}[b]{0.35\textwidth}
         \centering
             		\begin{tikzpicture}[scale=1.25]
			
			\node (V0) at (0:0) [draw=none] {};
			
			\foreach\i in {1,3,5,7,9,11,13}
			{
				\node (V\i) at ($(V0)+({(360/14 * \i)}:1.75)$) [draw, circle, scale=0.6, fill, label={}] {};
			}
			
			\foreach\i in {2,4,6,8,10,12,14}
			{
				\node (V\i) at ($(V0)+({(360/14 * \i)}:1.75)$) [draw, circle, scale=0.6, label={}] {};
			}
			
			\foreach\i in {1,2,3,4,5,6,7,8,9,10,11,12,13}
			{
				\pgfmathtruncatemacro\iplus{\i+1}
				\path (V\i) edge[very thick] (V\iplus);
			}
			\path (V14) edge[very thick] (V1);
			
			\path
				(V1) edge[very thick] (V6)
				(V2) edge[very thick] (V11)
				(V3) edge[very thick] (V8)
				(V4) edge[very thick] (V13)
				(V5) edge[very thick] (V10)
				(V7) edge[very thick] (V12)
				(V9) edge[very thick] (V14)
			;
			
			\end{tikzpicture}
         \caption{The Heawood graph.}
         \label{fig:heawood}
     \end{subfigure}
     \caption{In \Cref{fig:p5exmp}, the tight cut, which is comprised of the dashed lines, stops the highlighted $P_5$ from being in a Hamiltonian cycle.
     The graph in \Cref{fig:p5exmp} is a member of $\Barn$.}
\end{figure}

A graph $G$ is called \emph{Pfaffian} if it has an orientation of its edges such that for every perfect matching $M$ of $G$ and every $M$-conformal cycle $C$, the number of edges in $C$ oriented in the same way is odd, for either direction of traversal.
Pfaffian graphs have a myriad of connections to other problems, both in mathematics and outside of it (see \cite{mccuaig2004polya} for an excellent overview).
In particular, the following remarkable statement holds.

\begin{theorem}[Kasteleyn \cite{kasteleyn1967graph}]\label{thm:planarpfaff}
    All planar graphs are Pfaffian.
\end{theorem}

The problem of characterising Pfaffian graphs ended up being one of the central, motivating questions in Matching Theory.
As with many problems in matching theory, it can be reduced to bricks and braces.

\begin{theorem}[Vazirani and Yannakakis \cite{vazirani1989pfaffian}]\label{thm:tightcutpfaffian}
    A matching covered graph is Pfaffian if and only if its bricks and braces are Pfaffian.
\end{theorem}

Characterising Pfaffian bricks remains an open problem, but for braces McCuaig \cite{mccuaig2004polya}, and Robertson, Seymour, and Thomas \cite{robertson1999permanents} independently resolved this problem by providing the following answer.
Let $G_1,G_2,G_3$ be three bipartite graphs, such that their pairwise intersection is a cycle $C$ of length four and we have $V(G_i) \setminus V(C) \neq \emptyset$ for all $i \in \{ 1,2,3 \}$.
Further, let $S \subseteq E(C)$ be some subset of the edges of $C$.
A \emph{trisum of $G_1,G_2,G_3$ at $C$} is a graph $( \bigcup_{i=1}^3 G_i ) - S$.
For later use, we call a trisum \emph{cubic} if $S = E(C)$.
Note that the cubic trisum of three cubic, bipartite graphs is itself again cubic.

\begin{theorem}[McCuaig \cite{mccuaig2004polya}, Robertson et al.\ \cite{robertson1999permanents}]\label{thm:pfafftrisum}
A brace is Pfaffian if and only if it is the Heawood graph (see \Cref{fig:heawood}) or can be constructed from planar braces by repeated use of the trisum operation.
\end{theorem}
\begin{corollary}\label{cor:pfaffcubictrisum}
A cubic brace is Pfaffian if and only if it is the Heawood graph or can be constructed from cubic, planar braces by repeated use of the cubic trisum operation.
\end{corollary}
\begin{proof}
If a cubic brace is constructed from cubic, planar braces by repeated use of the cubic trisum operation, then it is clearly constructed from planar braces by repeated use of the trisum operation and is thus Pfaffian due to \Cref{thm:pfafftrisum}.
Of course, if we are considering the Heawood graph, there is nothing to prove.

Now, let us assume to the contrary of the forward direction that $G$ is a cubic, Pfaffian brace which cannot be constructed by repeated use of the cubic trisum operation starting from cubic planar braces.
We can assume that $G$ is non-planar, not the Heawood graph, and a minimal counterexample to the statement.

According to \Cref{thm:pfafftrisum}, there exist three Pfaffian braces $G_1$, $G_2$, and $G_3$ whose pairwise intersection is a cycle $C$ of length four, such that $G$ is the trisum of $G_1,G_2,G_3$ at $C$.
Suppose for some $i \in \{ 1,2,3 \}$ there exists some $v \in V(G_i - C)$ which has degree greater than three in $G_i$, then clearly $G$ would contain a vertex with degree greater than three and thus not be cubic.
We can thus make the narrower supposition that in fact a vertex $u \in V(C)$ has degree greater than three in $G_i$ for some $i \in \{ 1,2,3 \}$.
However, note that exactly two edges of $u$ are contained in $E(C)$ and all other edges are kept in the construction of $G$ and thus $u$ also has degree greater than three in $G$, again contradicting that it is cubic.

Thus $G_1,G_2,G_3$ are cubic, Pfaffian braces which, due to the minimality of $G$, were constructed from cubic, planar braces by repeated application of the cubic trisum operation.
Therefore the only option left is for $E(G) \cap E(C)$ to be non-empty, but this would again create vertices of degree greater than three.
This lets us conclude that $G$ was indeed constructed in the way suggested by the statement, which completes our proof.
\end{proof}

Let us now briefly observe how a Hamiltonian cycle in a cubic graph interacts with a $C_4$.

\begin{observation}\label{obs:cubicc4ham}
    Let $G$ be a cubic graph that is not isomorphic to $K_4$, let $C$ be a cycle of length four in $G$, and let $H$ be a Hamiltonian cycle in $G$.
    Then $C \cap H$ is either isomorphic to $P_4$ or to two copies of $P_2$.
\end{observation}

Using this observation, we can easily deduce the following if the graph is also $P_4$-Hamiltonian.

\begin{observation}\label{obs:c4choose}
    Let $G$ be a cubic, $P_4$-Hamiltonian graph that is not isomorphic to $K_4$ and let $C$ be a cycle of length four in $G$.
    For any path $P \subseteq C$ of length four, there exists a Hamiltonian cycle $H$ of $G$, such that $C \cap H = P$.
    Similarly, for any perfect matching $M$ of $C$, there exists a Hamiltonian cycle $H$ of $G$, such that $E(C \cap H) = M$.
\end{observation}

We can now prove a key lemma that lets us preserve $P_4$-Hamiltonicity through cubic trisums.

\begin{lemma}\label{lem:bracestosum}
    Let $G_1$, $G_2$, and $G_3$ be cubic, Pfaffian braces and let $G$ be the result of a cubic trisum of $G_1, G_2, G_3$ at $C$.
    If $G_1$, $G_2$, and $G_3$ are $P_4$-Hamiltonian, then $G$ is $P_4$-Hamiltonian.
\end{lemma}
\begin{proof}
Let $P$ be a path of length three in $G$.
Suppose $P \subset G_1$ and let $H_1$ be a Hamiltonian cycle of $G_1$ with $P \subset H_1$, which exists thanks to the $P_4$-Hamiltonicity of $G_1$.
We note that due to \Cref{obs:cubicc4ham}, the non-trivial components of $H_1 - E(C)$ must be paths of even length ending on two vertices which are adjacent on $C$.
Furthermore, we let $V(C) = \{ u_0,u_1,u_2,u_3 \}$, such that $E(C) = \{ \{ u_i, u_j \} \mid i \in \{ 0,1,2,3 \} \text{ and } i + 1 \equiv j \pmod{4} \}$.

First, we consider the case in which $H_1 - E(C)$ contains a single non-trivial component $Q$.
Note that $Q$ is a path which must have all vertices in $V(G_1) \setminus V(C)$ as its internal vertices.
W.l.o.g. we can assume that $u_0$ and $u_1$ are the endpoints of $Q$.

Now, let us make use of \Cref{obs:c4choose}.
We let $H_2$ be a Hamiltonian cycle of $G_2$ with $E(H_2) \cap E(C) = \{ u_0u_1, u_2u_3 \}$, and let $H_3$ be a Hamiltonian cycle of $G_3$ with $E(H_3) \cap E(C) = \{ u_3u_0, u_0u_1, u_1u_2 \}$.
Let $R_1$ be the component of $H_2 - \{ u_0u_3, u_1u_2 \}$ that is a path with the endpoints $u_1$ and $u_2$, and let $R_2$ be the other component of $H_2 - \{ u_0u_3, u_1u_2 \}$.
Further, let $R_3$ be the non-trivial component of $H_3 - \{ u_3u_0, u_0u_1, u_1u_2 \}$, which means it has the endpoints $u_2$ and $u_3$.
It is now easy to see that $Q + R_1 + R_3 + R_2$ is a Hamiltonian cycle of $G$.

Similarly, if $H_1 - E(C)$ contains two non-trivial components, we can choose two Hamiltonian cycles in $G_2$ and $G_3$ to complete these two paths into a Hamiltonian cycle of $G$.
We can of course proceed analogously if $P$ is entirely contained in $G_2$ or $G_3$.

Let us therefore assume that $P \subset G$, but $P \not\subset G_i$, for any $i \in \{ 1, 2, 3 \}$.
We note that, since $G$ is cubic, there must therefore exist exactly one $j \in [3]$ with $E(P) \cap E(G_j) = \emptyset$.
W.l.o.g.\ we assume that $j = 3$ and further, we can also assume that $|E(G_1) \cap E(P)| = 1$ and $|E(G_2) \cap E(P)| = 2$.
Since $P$ runs through one of the vertices in $V(C)$, where we performed the cubic 4-sum, we also know that $|P \cap V(C)| = 1$ and we can assume that $u_0 \in V(P)$.

In $G_2$, we choose a Hamiltonian cycle $H_2$ with $P \cap G_2 \subset H_2$ such that $u_0u_1 \in E(H_2)$.
Note that this immediately implies $u_2u_3 \in E(H_2)$.
Suppose now that $u_1u_2 \in E(H_2)$.
(We know that $u_0u_3 \not\in E(H_2)$, since $H_2$ enters $C$ through an edge outside of $C$ at $u_0$.)
Let $v_1$ be the neighbour of $u_1$ in $G_1$ which is not on $C$.
Inside of $G_1$, we choose a Hamiltonian cycle $H_1$ using $u_0u_1$, $u_1v_1$, and the lone edge in $E(P) \cap E(G_1)$.
Note that $H_1$ must then also use the edge $u_2u_3$, but cannot use $u_0u_3$ or $u_1u_2$, since $G_1$ is cubic.
We can now choose a Hamiltonian cycle $H_3$ in $G_3$ which uses all edges of $C$ except for $u_1u_2$.
We can now piece together a Hamiltonian cycle for $G$ which contains $P$ as in the previous cases.

If on the other hand we have $u_1u_2 \not\in E(H_2)$, we choose a Hamiltonian cycle $H_1$ in $G_1$ which uses $u_0u_3$, $u_3u_2$, and the sole edge in $E(P) \cap E(G_1)$.
For $H_1$, we observe that this implies $u_2u_1 \in E(H_1)$, since $G_1$ is cubic.
Finally, we then choose a Hamiltonian cycle $H_3$ for $G_3$ which uses all edges of $C$ except for $u_2u_3$.
As in all previous cases, this allows us to construct a Hamiltonian cycle for $G$ which uses $P$, concluding our proof.
\end{proof}

Before we prove our main theorem, we note that while the fact that it suffices to consider braces for \Cref{con:barnette} is a novel statement in the literal sense, a quick trip through the literature yields a proof of \Cref{lem:bracehamil} independent of our approach.
Kelmans proved the following in 1986.
We call a graph \emph{cyclically $k$-connected}, for $k \in \N$, if for any cut $F$ of order less than $k$, at most one component of $G - F$ contains a cycle.

\begin{theorem}[Kelmans \cite{kelmans1986konstruktsii}]\label{thm:4cyccon}
    \Cref{con:barnette} holds if and only if every cyclically $4$-connected graph in $\Barn$ is Hamiltonian.
\end{theorem}

The interested reader may find it easier to get an overview of the work of Kelmans on \Cref{con:barnette} in an article by Alt et al.\ \cite{alt2016thoughts} in which translations of his results along with sketches for their proofs are provided.
The next result was proven as part of a larger project on $n$-extendability.

\begin{theorem}[Holton and Plummer \cite{holton1988matching}]\label{thm:cyc4con}
    Let $k \in \N$ be a positive integer and let $G$ be a $(k+1)$-regular, $(k+1)$-connected, bipartite graph.
    Then if $G$ is at least cyclically $k^2$-connected, $G$ is also $k$-extendable.
\end{theorem}
\begin{corollary}\label{cor:bracecyc4con}
    If $G$ is a cubic, $3$-connected, bipartite graph, then $G$ is cyclically 4-connected if and only if $G$ is a brace.
\end{corollary}
\begin{proof}
With $k =2$, \Cref{thm:cyc4con} covers one direction.
For the other, let $G$ be a brace.
By definition, $G$ cannot have a non-trivial tight cut and thus, using \Cref{lem:tightcutdisjoint} and the $3$-connectivity of the graph, we conclude that $G$ possesses no non-trivial cuts of order 3 or less.
Hence $G$ is cyclically $4$-connected.
\end{proof}

Thus, as a corollary of \Cref{thm:cyc4con}, we know that \Cref{con:barnette} holds if and only if every brace in $\Barn$ is Hamiltonian, which was claimed in the introduction as \Cref{lem:bracehamil}.
% These two results allow us to give a short proof that \Cref{con:barnette} holds if and only if every brace in $\Barn$ is Hamiltonian.
% \begin{proof}[Proof of \Cref{lem:bracehamil}]
%     If \Cref{con:barnette} holds, then every brace in $\Barn$ is Hamiltonian.
%     For the other direction, suppose that there exists a counterexample $G \in \Barn$ to \Cref{con:barnette} which is not a brace.
%     We can choose $G$ to be of minimal size and conclude from \Cref{thm:4cyccon} that $G$ is cyclically 4-connected.
%     Thus \Cref{thm:cyc4con} tells us that $G$ is 2-extendable.
%     Since $G \in \Barn$, we must have $|V(G)| > 4$, and hence $G$ is also 1-extendable by \Cref{thm:exttocon}.
%     As a consequence of \Cref{thm:braceequiv}, $G$ must therefore be a brace, which yields a contradiction.
% \end{proof}
This leads us into the proof of our main theorem.

\begin{theorem}\label{thm:mainthmpfaffian}
    The following statements are equivalent:
    \begin{enumerate}
        \item All graphs in $\Barn$ are Hamiltonian.
        
        \item All braces in $\Barn$ are Hamiltonian.
        
        \item All braces in $\Barn$ are $P_4$-Hamiltonian.
        
        \item All cubic, Pfaffian braces are $P_4$-Hamiltonian.
        
        \item All cubic, 3-connected, Pfaffian, bipartite graphs are $P_4$-Hamiltonian.
        
        \item All cubic, 3-connected, Pfaffian, bipartite graphs are Hamiltonian.
    \end{enumerate}
\end{theorem}
\begin{proof}
The equivalence of (i) and (ii) can be taken from \Cref{lem:bracehamil}.
That (i) implies (iii) is a consequence of \Cref{thm:equivhertel}.
To have (iii) imply (iv), we need to use \Cref{lem:bracestosum} and \Cref{cor:pfaffcubictrisum}.
(We note that the Heawood graph is also $P_4$-Hamiltonian.)
Since the first point of \Cref{thm:mainthmone} allows us to infer the $P_4$-Hamiltonicity of a set of cubic, bipartite, matching covered graphs from their braces, we also know that (iv) implies (v).
The implication from (v) to (vi) is trivial and (vi) to (i) is implied by \Cref{thm:planarpfaff}.
\end{proof}

We conclude this section by giving a reversed version of \Cref{lem:bracestosum}.
However, in this direction we only ask for Hamiltonicity.
This might prove useful should \Cref{con:barnette} fail, or if there exists a way to show that non-planar, cubic, Pfaffian braces are Hamiltonian.

For this purpose, we first note that there exists an earlier characterisation of Pfaffian bipartite graphs.
Since this characterisation did not yield a polynomial time algorithm for recognising Pfaffian bipartite graphs, the search continued, but for our purposes it will prove quite useful.
A \emph{bisubdivision} of a graph $H$ is the result of replacing the edges of $H$ with pairwise disjoint paths of odd length (possibly just $P_2$).

\begin{theorem}[Little \cite{little1975characterization}]\label{thm:pfaffianlittle}
    A bipartite graph with a perfect matching is Pfaffian if and only if it does not contain a bisubdivision of $K_{3,3}$ as a conformal subgraph.
\end{theorem}

We will also need a method for finding these subgraphs.
Let $C$ be a cycle of length four in a brace $G$ with $V(C) = \{ a,b,c,d \}$ and $\{a,c\},\{b,d\} \not\in E(C)$.
We call a pair of paths $L$ and $R$---with $L$ having the endpoints $a$ and $c$, and $R$ having the endpoints $b$ and $d$---a \emph{conformal cross over $C$} if $C + L + R$ is a conformal subgraph of $G$.

\begin{lemma}[Giannopoulou and Wiederrecht \cite{giannopoulou2021disjoint}]\label{lem:confcross}
    Let $G$ be a brace and let $C$ be a cycle of length four in $G$.
    There is a conformal cross over $C$ if and only if $C$ is contained in a conformal bisubdivision of $K_{3,3}$.
\end{lemma}

And with this, we turn to the proof of the lemma promised above.

\begin{lemma}\label{lem:sumtobraces}
    Let $G_1$, $G_2$, and $G_3$ be Pfaffian braces, and let $G$ be cubic and the result of a cubic trisum of $G_1, G_2, G_3$ at $C$.
    If $G$ is Hamiltonian, then $G_1$, $G_2$, and $G_3$ are Hamiltonian.
\end{lemma}
\begin{proof}
It is easy to observe that the fact that $G$ is cubic implies that $G_1$, $G_2$, and $G_3$ will also be cubic.
Now, let $H$ be a Hamiltonian cycle in $G$.
We will prove that we can construct a Hamiltonian cycle for $G_1$.
The construction for $G_2$ and $G_3$ can then be performed analogously.
Since $H$ is a Hamiltonian cycle, $H_1 = G[V(G_1)] \cap H$ consists of either one or two non-trivial components, each of which is a path with both endpoints in $V(C)$, such that $H_1 - V(C)$ is a spanning subgraph of $G_1 - V(C)$.

Suppose $P$ is the sole non-trivial component of $H_1$, where $v, w \in V(C)$ are the endpoints of $P$, such that $e = \{ v, w \} \in E(C)$, then $(P + C) - e$ is a Hamiltonian cycle in $G_1$.
Similarly, if $H_1$ consists of two paths $P_1, P_2$, each with endpoints $v_i,w_i \in V(C)$, such that $e_i = \{ v_i, w_i \} \in E(C)$ for both $i \in \{ 1, 2 \}$, then $(P_1 + P_2 + C) - \{ e_1 , e_2 \}$ is a Hamiltonian cycle in $G_1$.

On the other hand, we claim that it is impossible for $H_1$ to consist of two paths $P$ and $Q$, such that each path has endpoints which are non-adjacent in $C$.
Note that therefore $P$ and $Q$ are paths of even length, as they have endpoints of the same colour.
Let $M$ be a perfect matching of $G$ such that $M \subset E(H)$.
Due to their even length, both $P$ and $Q$ respectively have exactly one edge $e_P$, respectively $e_Q$, from $M$ which covers one of its endpoints, but is not actually an edge in $E(P)$, respectively in $E(Q)$.
In particular, the points of $e_P$ and $e_Q$ which lie on $C$ are necessarily adjacent on $C$.
Let $\{ x, y \} = E(C) \cap (e_P \cup e_Q)$ and note that $(E(H_1) \cap M) \cup \{ xy \}$ is a perfect matching of $G_1$ and in particular of $H_1 + C$.
Thus $H_1 + C$ contains a conformal cross over $C$, meaning $C$ is contained in a conformal bisubdivision of $K_{3,3}$ in $G_1$, according to \Cref{lem:confcross}.
By \Cref{thm:pfaffianlittle}, $G_1$ can therefore not be Pfaffian, contradicting our assumption.
Therefore our claim at the beginning of the paragraph is valid.

We are left with one more case in which $H_1$ has only one non-trivial component $P$, whose endpoints are not adjacent on $C$.
But in this situation, there must exists a path $Q$, whose endpoints are the two vertices in $V(C) \setminus V(P)$, which either spans $G_2 - V(P)$ or $G_3 - V(P)$.
W.l.o.g.\ we suppose that $Q \subseteq G_2 - V(P)$.
Note that in $G_3$ there exists a perfect matching $M$ for which $C$ is conformal, since $G_3$ is 2-extendable.
If we now consider that $(P + Q) - C$ is a spanning subgraph of $(G_1 + G_2) - C$, it is easy to see that $P$ and $Q$ are again paths of even length and form a conformal cross over $C$ in $G + E(C)$, with $M \setminus E(C)$ being a matching which covers the part of $G_3$ not involved in the cross.
As in the previous case, \Cref{lem:confcross} and \Cref{thm:pfaffianlittle} tell us that $G + E(C)$ is not Pfaffian.
However, according to \Cref{thm:pfafftrisum}, the graph $G + E(C)$ should in fact be Pfaffian, as it can be constructed via a trisum from three Pfaffian braces.
\end{proof}

\section{Generating cubic, planar braces}\label{sec:generation}

It is nice to know that it suffices to look at braces to resolve \Cref{con:barnette}, but that is of limited use if one does not know how to characterise these graphs in a useful way.
For this purpose, we discuss how the generation procedure for members of $\Barn$ presented by Holton, Manvel, and McKay \cite{holton1984hamiltonian} can be amended such that we can track which of the generated graphs are braces.
This removes the burden of checking whether the non-braces produced are Hamiltonian.

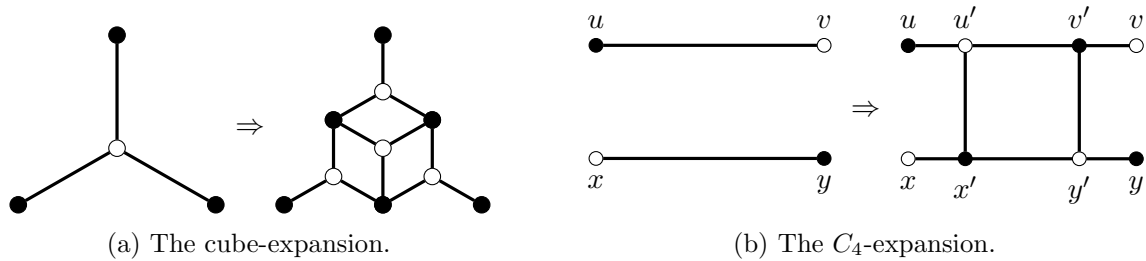
\begin{figure}[ht]
     \centering
     \begin{subfigure}[b]{0.4\textwidth}
         \centering
         			\begin{tikzpicture}[scale=0.75]
			
			\node (V0) at (0:0) [draw, circle, scale=0.6, label={}] {};
			
			\foreach\i in {1,2,3}
			{
				\node (V\i) at ($(V0)+({(360/3 * \i) + 360/4}:2)$) [draw, circle, scale=0.6, fill, label={}] {};
			}
			
			\foreach\i in {1,2,3}
			{
				\path (V\i) edge[very thick] (V0);
			}
			\end{tikzpicture} \raisebox{30pt}{$\Rightarrow$}
         			\begin{tikzpicture}[scale=0.75]
			
			\node (V0) at (0:0) [draw, circle, scale=0.6, label={}] {};
			
			\foreach\i in {1,2,3}
			{
				\node (V\i) at ($(V0)+({(360/3 * \i) + 360/4}:2)$) [draw, circle, scale=0.6, fill, label={}] {};
			}
			\foreach\i in {1,2,3,4,5,6}
			{
				\node (U\i) at ($(V0)+({(360/6 * \i) + 360/4}:1)$) [draw, circle, scale=0.6, label={}] {};
			}
			\node (W1) at ($(V0)+({(360/6 * 1) + 360/4}:1)$) [draw, circle, fill, scale=0.6] {};
			\node (W3) at ($(V0)+({(360/6 * 3) + 360/4}:1)$) [draw, circle, fill, scale=0.6] {};
			\node (W5) at ($(V0)+({(360/6 * 5) + 360/4}:1)$) [draw, circle, fill, scale=0.6] {};
			
			\foreach\i in {1,2,3,4,5}
			{
				\pgfmathtruncatemacro\iplus{\i+1}
				\path (U\i) edge[very thick] (U\iplus);
			}
			\path
				(U6) edge[very thick] (U1)
				(V1) edge[very thick] (U2)
				(V2) edge[very thick] (U4)
				(V3) edge[very thick] (U6)
				(V0) edge[very thick] (U1)
				(V0) edge[very thick] (U3)
				(V0) edge[very thick] (U5)
				;
			\end{tikzpicture}
         \caption{The cube-expansion.}
         \label{fig:cubeext}
     \end{subfigure}
     \qquad
     \begin{subfigure}[b]{0.5\textwidth}
         \centering
         \raisebox{3.75pt}{			\begin{tikzpicture}[scale=0.75]
			
			\node (V1) at (0,0) [draw, circle, scale=0.5 ,label=below:{$x$}] {};
			\node (V2) at (4,0) [draw, circle, scale=0.5, fill ,label=below:{$y$}] {};
			\node (V5) at (0,2) [draw, circle, scale=0.5, fill ,label={$u$}] {};
			\node (V6) at (4,2) [draw, circle, scale=0.5 ,label={$v$}] {};
			
			\path
				(V1) edge[very thick] (V2)
				(V5) edge[very thick] (V6)
			;
			\end{tikzpicture}} \raisebox{37pt}{$\Rightarrow$}
         			\begin{tikzpicture}[scale=0.75]
			
			\node (V1) at (0,0) [draw, circle, scale=0.5 ,label=below:{$x$}] {};
			\node (V2) at (4,0) [draw, circle, scale=0.5, fill ,label=below:{$y$}] {};
			\node (V3) at (1,0) [draw, circle, scale=0.5, fill ,label=below:{$x'$}] {};
			\node (V4) at (3,0) [draw, circle, scale=0.5 ,label=below:{$y'$}] {};
			\node (V5) at (0,2) [draw, circle, scale=0.5, fill ,label={$u$}] {};
			\node (V6) at (4,2) [draw, circle, scale=0.5 ,label={$v$}] {};
			\node (V7) at (1,2) [draw, circle, scale=0.5 ,label={$u'$}] {};
			\node (V8) at (3,2) [draw, circle, scale=0.5, fill ,label={$v'$}] {};
			
			\path
				(V1) edge[very thick] (V4)
				(V4) edge[very thick] (V2)
				(V5) edge[very thick] (V7)
				(V7) edge[very thick] (V6)
				(V3) edge[very thick] (V7)
				(V4) edge[very thick] (V8)
			;
			\end{tikzpicture}
         \caption{The $C_4$-expansion.}
         \label{fig:c4ext}
     \end{subfigure}
     \caption{The two operations needed to construct all graphs in $\Barn$ starting from the cube.
        The cube expansion can be used independent of the colours of the vertices involved.}
\end{figure}

The generation process requires two operations pictured in \Cref{fig:cubeext} and \Cref{fig:c4ext}.
We will call the operation used on a single vertex a \emph{cube-expansion} and the operation used on a pair of edges a \emph{$C_4$-expansion}.
We will need a more rigorous definition of the $C_4$-expansion than the intuition given by the illustration.

Let $G = (A \cup B, E(G)) \in \Barn$ be a matching covered, planar, bipartite graph and let $uv, xy \in E(G)$, with $u,y \in A$ and $v,x \in B$, such that $u,v,x,y$ lie on a common facial cycle $C$ and $(C - uv) - xy$ consists of two odd paths.
We call $H$ a \emph{$C_4$-expansion (in $G$) at $uv$ and $xy$} if
\begin{itemize}
    \item $\V{H} = A' \cup B'$, where $A' = A \cup \{ v', x' \}$ and $B' = B \cup \{ u', y' \}$,
    
    \item $\E{H} = (\E{G} \setminus \{ uv, xy \}) \cup \{ uu', vv', xx', yy', u'v', x'y', u'x', v'y' \}$.
\end{itemize}
We can in fact remove the requirements that $G$ is planar and along with it, that the cycle $C$ is facial, and still get a useful operation.
In this case we call $H$ a \emph{general $C_4$-expansion}.

We note that the requirements on the parity of the paths of $(C - uv) - xy$ require $u,v,x,y$ to be distinct from each other.
If we start with the cube, it turns out that cube- and $C_4$-expansions suffice to construct the entirety of $\mathcal{B}$.

\begin{theorem}[Holton et al. \cite{holton1984hamiltonian} (see Theorem 4)]
    A graph is cubic, 3-connected, planar, and bipartite if and only if it can be constructed from the cube by repeated applications of the cube- and $C_4$-expansion.
\end{theorem}

We observe that the cube-expansion creates a cut consisting of three mutually disjoint, pairwise non-adjacent edges and thus a tight cut, by \Cref{lem:tightcutdisjoint}.
One side of the tight cut created by the cube-expansion contracts into the cube, justifying its name.
It is not hard to prove that the cube-expansion preserves being cubic, $3$-connected, planar, bipartite, and Hamiltonian.
However, in the published literature concerning \Cref{con:barnette}, it seems to be the only known expansion---or rather reduction applied in reverse---which allows us to keep both the structural properties of a graph relevant to \Cref{con:barnette} and also preserve its Hamiltonicity, strengthened or otherwise.
In particular, the sticking point for all other operations tends to be that they reduce connectivity.

The fact that the cube-expansion behaves more nicely can be attributed to the fact that it is special case of a splice of two graphs in $\Barn$ which preserves planarity.
In fact, if we consider the splice of any two graphs in $\Barn$, we can easily show that this graph remains 3-connected, bipartite, and cubic.
According to \Cref{lem:tightcutdisjoint}, the splicing cut is therefore tight.
Thus the cube-expansion is simply a special case of a splice preserving planarity.

In contrast to the cube-expansion, the $C_4$-expansion is only known to preserve the structural properties of a graph in $\Barn$.
We add to the list of properties it is known to preserve by showing that it also interacts nicely with 1- and 2-extendability.
In fact, this is true for a much more general class of graphs than $\Barn$.

\begin{lemma}\label{lem:c4expansion2}
Let $H$ be a general $C_4$-expansion of a matching covered, bipartite graph $G$, then $H$ is matching covered and if $G$ is $2$-extendable, then $H$ is also $2$-extendable.
\end{lemma}
\begin{proof}
For this proof we will use the names given to the vertices involved in the $C_4$-expansion in \Cref{fig:c4ext}.
To start with, let us briefly discuss how we can extend a perfect matching $M$ of $G$ to a perfect matching of $H$.
If $uv \in M$, then we can remove $uv$ from $M$ and add $uu', vv'$, and we can proceed analogously if $xy \in M$.
Should $uv \not\in M$, we instead add $u'v'$, and again proceed analogously if $xy \not\in M$.
In the following, we will call the above steps \emph{adjusting $M$}.

Let us first prove that $H$ is matching covered and let $f \in E(H)$ be an arbitrary edge.
If $f \in E(G)$ as well, we can choose a perfect matching $M$ of $G$ containing $f$, which must exist as $G$ is matching covered.
No matter what exactly is contained in $M \cap \{ uv, xy \}$, it is easy to extend this to a perfect matching of $H$ by adjusting $M$.

Suppose instead that $f \in \{ uu', xx', vv', yy' \}$ and w.l.o.g.\ that in fact $f = uu'$.
Then we can choose a perfect matching $M$ of $G$ containing $uv$ and once more adjust $M$ to construct a perfect matching of $H$.
If instead $f \in \{ u'v', x'y' \}$, we can assume w.l.o.g.\ that $f = u'v'$.
Now we can match $u$ in $G$ to a vertex in $N_H(u) \setminus \{ v \}$ inside of a perfect matching $M$ of $G$ and by adjusting $M$, we get a perfect matching of $H$ which uses $u'v'$. 

Lastly, if $f \in \{ v'y', x'u' \}$, we can assume w.l.o.g.\ that $f = v'y'$ and take a perfect matching $M$ of $(G - u) - x$, which must exist according to \Cref{thm:extequiv}.
We have $M \subseteq E(H)$ and only $u, u', v', y', x, x'$ remain unmatched, allowing us the add the edges $uu', v'y', xx'$ to $M$ to complete a perfect matching of $M$ containing $v'y'$.

Next, let us assume that $G$ is a brace and let $F \subseteq E(H)$ be a matching containing two edges.
There are three cases to consider.

Suppose that $F \subseteq E(G)$, then there exists a perfect matching $M$ of $G$ with $F \subseteq M$ and we can again adjust $M$.
If we instead have $|F \cap E(G)| = 1$, we will choose a perfect matching of $M$ containing the edge $ab \in F \cap E(G)$ and have the freedom to fix another edge of $G$ which must be contained.
We will give a short list of the options (up to symmetry), under the assumption that $a,b \not\in \{ u,v,x,y \}$ and explain how we can change $M$ to get a perfect matching for $H$.
\begin{itemize}
    \item $uu' \in F$: Demand $ab, uv \in M$ and adjust $M$.
    
    \item $u'v' \in F$: Demand $ab, uu'' \in M$, for some $u'' \in N_H(u) \setminus \{ u' \}$, and adjust $M$.
    
    \item $u'x' \in F$: Use \Cref{thm:extequiv} to get a perfect matching matching $M'$ of $G - \{ a,b,v,y \}$ and extend this to a perfect matching $M' \cup \{ ab, u'x', vv', yy' \}$ of $H$.
\end{itemize}

Now suppose that $|\{ a,b \} \cap \{ u,v,x,y \}| = 1$.
We assume w.l.o.g. that $a = u$.
Again we list the resulting cases (up to symmetry) and how to modify $M$.
Note that $uu' \not\in F$.
\begin{itemize}
    \item $v'y' \in F$: Choose a $y'' \in N_G(y) \setminus \{ b, x \}$, which must exist since $G$ is $3$-connected by \Cref{thm:exttocon} and thus every vertex in $G$ has at least degree three.
    We can now demand that $ab, yy'' \in M$ and thus $uv, xy \not\in M$, which allows us to construct the perfect matching $M \cup \{ u'x', v'y' \} $ of $H$.
    Clearly, this construction resolves all cases in which $|F \cap \{ u'v', v'y', y'x', x'u' \}| = 1$, since the $C_4$ created by the $C_4$-expansion is conformal for the matching we constructed.
    
    \item $vv' \in F$: We can reuse the construction presented for the case $u'x' \in F$ in the previous list.
    This can also be used if $yy' \in F$.
    
    \item $xx' \in F$: Demand $ab, xy \in M$, which also means $uv \not\in M$, and adjust $M$.
\end{itemize}

Suppose that $|\{ a,b \} \cap \{ u,v,x,y \}| = 2$.
In this case, we can assume w.l.o.g.\ that $ab = ux$ and therefore know that $uu', xx' \not\in F$.
Thus to find matchings containing $ab$ and an edge of $\{ u'v', v'y', y'x', x'u' \}$, it suffices to demand that $ab \in M$ (see the first item of the previous list).
If $vv' \in F$ (or analogously $yy' \in F$), we can find a perfect matching $M$ of $G - \{ u,v,x,y \}$ using \Cref{thm:extequiv} and construct the perfect matching $M \cup \{ ux, u'x', vv', yy' \}$ of $H$.

In the third and final case, we have $F \subseteq (\E{H} \setminus \E{G})$ and this creates three subcases up to symmetry: $uu' \in F$, $u'v' \in F$, and $u'x' \in F$.
In the first, if $F \cap \{ vv', xx', yy' \} \neq \emptyset$, we can choose a perfect matching $M$ of $G$ with $uv, xy$ and adjust $M$.
If we have $x'y' \in F$, we choose a perfect matching $M$ with $uv, xx'' \in M$, where $x'' \in N_G(x) \setminus \{ x' \}$ and adjust $M$.
Should $v'y' \in F$, we can just take a perfect matching $M$ of $(G - u) - x$ and construct the perfect matching $M \cup \{ uu', v'y', xx' \}$ for $H$.
This covers all possibilities, since $uu' \in F$ implies that $u'v', u'x' \not\in F$.

For the second subcase, we know that $uu',u'x',vv',v'y' \not\in F$, since $u'v' \in F$.
If $xx' \in F$, or $yy' \in F$, we are again in the first subcase and if $x'y' \in F$, we can simply take a perfect matching of $G$ avoiding both $uv$ and $xy$ and then expand it with the edges $u'v', x'y'$.
The only part of the third subcase that does not fall into the first subcase can then be solved analogously to the second subcase.
This concludes our proof.
\end{proof}

\begin{figure}[ht]
    \centering
    			\begin{tikzpicture}
			
			\node (A1) at (2.25,0) [draw, circle, scale=0.6] {};
			\node (A2) at (2.25,0.75) [draw, circle, scale=0.6, fill] {};
			\node (A3) at (0,1.5) [draw, circle, scale=0.6, fill] {};
			\node (A4) at (1.5,1.5) [draw, circle, scale=0.6] {};
			\node (A5) at (2.25,1.5) [draw, circle, scale=0.6] {};
			\node (A6) at (2.25,2.25) [draw, circle, scale=0.6, fill] {};
			\node (A7) at (2.25,3) [draw, circle, scale=0.6] {};
			
			\node (B1) at (4,0) [draw, circle, scale=0.6, fill] {};
			\node (B2) at (4,0.75) [draw, circle, scale=0.6] {};
			\node (B3) at (3.625,1.5) [draw, circle, scale=0.6, fill] {};
			\node (B4) at (4.375,1.5) [draw, circle, scale=0.6, fill] {};
			\node (B5) at (4,2.25) [draw, circle, scale=0.6] {};
			\node (B6) at (4,3) [draw, circle, scale=0.6, fill] {};
			
			\node (C1) at (6.5,0) [draw, circle, scale=0.6] {};
			\node (C2) at (6.5,0.75) [draw, blue, circle, scale=0.6, fill] {};
			\node (C3) at (6.125,1.5) [draw, circle, scale=0.6] {};
			\node (C4) at (6.875,1.5) [draw, circle, scale=0.6] {};
			\node (C5) at (6.5,2.25) [draw, circle, scale=0.6, fill] {};
			\node (C6) at (6.5,3) [draw, circle, scale=0.6] {};
			
			\node (D1) at (8.25,0) [draw, circle, scale=0.6, fill] {};
			\node (D2) at (8.25,0.75) [draw, circle, scale=0.6] {};
			\node (D3) at (10.5,1.5) [draw, circle, scale=0.6] {};
			\node (D4) at (9,1.5) [draw, circle, scale=0.6, fill] {};
			\node (D5) at (8.25,1.5) [draw, circle, scale=0.6, fill] {};
			\node (D6) at (8.25,2.25) [draw, circle, scale=0.6] {};
			\node (D7) at (8.25,3) [draw, circle, scale=0.6, fill] {};
			
			\node (E1) at (0.75,4) [draw, circle, scale=0.6] {};
			\node (E2) at (1.5,3.5) [draw, circle, scale=0.6, fill] {};
			\node (E3) at (9,3.5) [draw, circle, scale=0.6] {};
			\node (E4) at (9.75,4) [draw, circle, scale=0.6, fill] {};

            \path
			(E1) edge[very thick, red] (A3)
			(E1) edge[very thick, red] (E2)
			(E1) edge[very thick] (E4)
			(E2) edge[very thick, red] (A7)
			(E2) edge[very thick] (E3)
			(E3) edge[very thick, red] (E4)
			(E3) edge[very thick, red] (D7)
			(E4) edge[very thick, red] (D3)
			;
			
			\path
			(A1) edge[very thick] (A2)
			(A1) edge[very thick] (A3)
			(A2) edge[very thick] (A4)
			(A2) edge[very thick] (A5)
			(A3) edge[very thick] (A4)
			(A4) edge[very thick] (A6)
			(A5) edge[very thick] (A6)
			(A6) edge[very thick] (A7)
			;
			
			\path
			(B1) edge[very thick] (B2)
			(B2) edge[very thick] (B3)
			(B2) edge[very thick] (B4)
			(B3) edge[very thick] (B5)
			(B4) edge[very thick] (B5)
			(B5) edge[very thick] (B6)
			;
			
			\path
			(C1) edge[very thick] (C2)
			(C2) edge[very thick] (C3)
			(C2) edge[very thick] (C4)
			(C3) edge[very thick] (C5)
			(C4) edge[very thick] (C5)
			(C5) edge[very thick] (C6)
			;
			
			\path
			(D1) edge[very thick] (D2)
			(D1) edge[very thick] (D3)
			(D2) edge[very thick] (D4)
			(D2) edge[very thick] (D5)
			(D3) edge[very thick] (D4)
			(D4) edge[very thick] (D6)
			(D5) edge[very thick] (D6)
			(D6) edge[very thick] (D7)
			;
			
			\path
			(A1) edge[very thick, dashed] (B1)
			(A5) edge[very thick, dashed] (B3)
			(A7) edge[very thick, dashed] (B6)
			(B1) edge[very thick, dashed] (C1)
			(B4) edge[very thick, dashed] (C3)
			(B6) edge[very thick, dashed] (C6)
			(C1) edge[very thick, dashed] (D1)
			(C4) edge[very thick, dashed] (D5)
			(C6) edge[very thick, dashed] (D7)
			;
			\end{tikzpicture}
     \caption{An example of a brace in $\Barn$.
        If the $C_4$-expansion at the red edges is reversed, we get a graph in $\Barn$ with three laminar tight cuts, marked with dashed lines.
        Should we use a cube expansion at the blue vertex, we gain a tight cut that cannot be disrupted by the $C_4$-expansion at the top of the graph.}
     \label{fig:bigexample}
\end{figure}
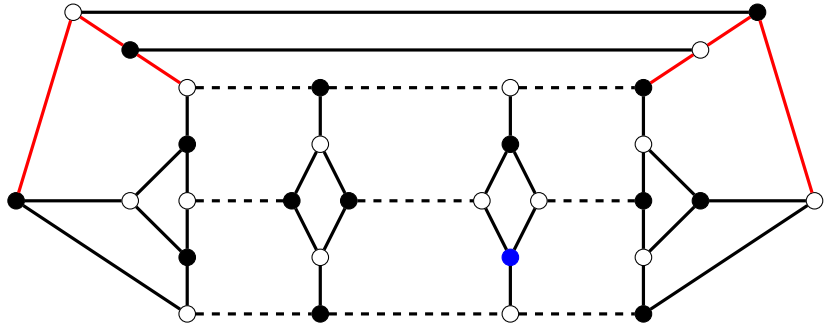

At this point, we note that it is entirely possible to use several cube-expansions on a graph in $\Barn$ and then apply a single $C_4$-expansion to build a brace.
This then begs the question of how and when a $C_4$-expansion destroys tight cuts.
Some intuition on this is given in \Cref{fig:bigexample}, where we see that a large number of tight cuts can be disrupted if a $C_4$-expansion touches both shores of those tight cuts, but the $C_4$-expansion does not destroy tight cuts for which it takes place entirely on one of the shores.

For the remainder of this section, we will work towards formalising this intuition.
We observe that the two edges used for the $C_4$-expansion cannot be part of the same tight cut.

\begin{lemma}\label{lem:c4exptight}
Let $G$ be a cubic, bipartite graph and let $H$ be a $C_4$-expansion of $G$ at $uv$ and $xy$.
No tight cut in $G$ can contain both $uv$ and $xy$.
\end{lemma}
\begin{proof}
Once more, we will use the names provided by \Cref{fig:c4ext} in this proof.

Suppose that there exists a tight cut $\Cut{X}$ with $uv, xy \in \Cut{X}$.
Note that according to \Cref{lem:tightcutcolour}, we can assume w.l.o.g.\ that $u,y \in X$.
Let $C$ be the cycle on which $u,v,x,y$ lie together.
Note that $E(C) \cap \Cut{X}$ must be even and $|E(C) \cap \Cut{X}| \geq 2$.
Thanks to \Cref{lem:tightcutdisjoint}, we also know that $|\Cut{X}| = 3$ and we can therefore conclude that $E(C) \cap \Cut{X} = \{ uv, xy \}$.
Thus, if $P$ and $Q$ are the two path components of $(C - uv) - xy$, then $P$ and $Q$ lie in distinct components of $G - \Cut{X}$.
W.l.o.g.\ we can assume that $u, y \in V(P)$ and in particular this means that $u$ and $y$ are the endpoints of $P$, which makes $P$ a path of even length, contradicting the requirements set for the application of the $C_4$-expansion at $uv$ and $xy$.
Thus no tight cut of $G$ can contain both $uv$ and $xy$.
\end{proof}

Two cuts $\Cut{X}$ and $\Cut{Y}$ are called \emph{laminar} if $X \subseteq Y$ or $Y \subseteq X$.
As Lov{\'a}sz \cite{lovasz1987matching} proves, the results of a tight cut decomposition are independent of which tight cuts are chosen and the order in which they are contracted.
Thus it is common to consider a maximal family of pairwise laminar tight cuts which translate into maximal families of pairwise laminar tight cuts for the two tight cut contractions resulting from any member of the original family.
This informs the way in which we state the main theorem of this section.

\begin{theorem}\label{thm:tightcuttracking}
Let $G = (A \cup B, E(G)) \in \Barn$ and let $\FFF$ be an inclusion-wise maximal family of non-trivial, pairwise laminar tight cuts in $G$.
If $H$ is a $C_4$-expansion of $G$ at $uv$ and $xy$, then the following is an inclusion-wise maximal family of non-trivial, pairwise laminar tight cuts in $H$, up to the renaming of at most four edges in $H$:
\[ \FFF' := \{ \CutG{G}{X} \mid \CutG{G}{X} \in \FFF \text{ and } |\{ u,v,x,y \} \cap X| \neq 2 \} . \]
\end{theorem}
\begin{proof}
Let $C \subseteq G$ be the facial cycle containing $uv$ and $xy$ and let $E' = E(H) \setminus E(G)$, which are the edges added as part of the $C_4$-expansion.
We will again use the names provided by \Cref{fig:c4ext} during this proof.
To set up the rest of our arguments, we partition $\FFF$ into three sets:
\begin{itemize}
    \item $\FFF_1 := \{ \CutG{G}{X} \in \FFF \mid |\{ u,v,x,y \} \cap X| \in \{ 0, 4 \} \}$,
    
    \item $\FFF_2 := \{ \CutG{G}{X} \in \FFF \mid |\{ u,v,x,y \} \cap X| \in \{ 1, 3 \} \}$, and
    
    \item $\FFF_3 := \FFF \setminus ( \FFF_1 \cup \FFF_2 ) = \{ \CutG{G}{X} \in \FFF \mid |\{ u,v,x,y \} \cap X| = 2 \}$.
\end{itemize}
We will start by proving that the cuts in $\FFF_1 \cup \FFF_2$ can be translated into tight cuts in $H$.
Let $\CutG{G}{X}$ be a cut in $\FFF_1$ and let $M$ be a perfect matching of $G$, with $ab \in (M \cap \Cut{X})$.
It is easy to observe that $\CutG{G}{X} \cap E' = \emptyset$ in this case.
Therefore $\CutG{G}{X}$ remains a cut in $H$ and we can assume w.l.o.g.\ that $|\{ u,v,x,y \} \cap X| = 0$.
Furthermore, we can assume that $X$ contains all endpoints of the edges of $\CutG{G}{X}$ which lie in $A$, according to \Cref{lem:tightcutcolour}.
From this circumstance it is evident that $\CutG{G}{X}$ is also tight in $H$, as $\InducedSubgraph{G}{X} \subseteq H$ and we must match exactly one vertex from $A \cap X$ with a vertex outside of $X$ to find a perfect matching of $H$.
Thus $\FFF_1$ is a family of non-trivial, laminar tight cuts in $H$.

Next, we consider a cut $\CutG{G}{Y} \in \FFF_2$.
W.l.o.g.\ we assume that $u \in Y$ and $v,x,y \in \overline{Y}$.
To separate $u$ from $v$ in $G$, we must have $uv \in \CutG{G}{Y}$, which also means that $xy \not\in \CutG{G}{Y}$ according to \Cref{lem:c4exptight}.
We claim that $(\CutG{G}{Y} \setminus \{ uv \}) \cup \{ uu' \}$ is a tight cut in $H$.
This can be shown using an argument analogous to the one used in the case from the previous paragraph, as we again have $\InducedSubgraph{G}{Y} \subseteq H$.
This is the situation which requires the renaming of an edge, if we truly wanted to translate the tight cuts of $G$ into $H$.
Note that $|\FFF_2| \leq 4$, since we only have four vertices which can be exclusive to a shore.
This justifies our bound on the number of edges that need to be renamed.

We will now consider $\FFF_3$, or more explicitly, we will prove that all tight cuts in $H$ must be related to the cuts in $\FFF_1 \cup \FFF_2$.
For this discussion, we can assume $\FFF_1 \cup \FFF_2$ to be empty, or equivalently, we can consider $G$ and $H$ to be the results of contracting all tight cuts in $\FFF_1 \cup \FFF_2$ and renaming vertices appropriately, i.e.\ if we contract a shore from a cut of $\FFF_2$ which contained only $u$, but not $v$, $x$, or $y$, then we rename the contraction vertex to $u$.
Since the family of tight cuts in $\FFF$ is laminar, any tight cuts we find in this reduced version of the graphs can also be found if we do not contract $\FFF_1 \cup \FFF_2$, up to some relabelling of edges incident to $u,v,x,y$.
Note that even if we had to contract tight cuts to arrive in this simplified case, we know that according to \Cref{lem:tightcutpreserve} our graphs are still found in $\Barn$.
We claim that under these assumptions $H$ is a brace.

Suppose towards a contradiction that $\CutG{H}{Z}$ is a non-trivial tight cut in $H$ and note that since $H \in \Barn$, according to \Cref{lem:tightcutdisjoint}, the cut $\CutG{H}{Z}$ is a set of three mutually disjoint, pairwise non-adjacent edges.
If $|\{ u,v,x,y \} \cap Z| \in \{ 0, 1, 3, 4 \}$ then $\CutG{H}{Z}$ corresponds to a tight cut in $\FFF_1 \cup \FFF_2$, which we assumed to be empty.
Thus $|\{ u,v,x,y \} \cap Z| = 2$ and therefore some edges from $E'$ must be part of $\CutG{H}{Z}$.

We distinguish three cases, the first being that $u$ and $v$ lie on the same shore of $\CutG{H}{Z}$.
Clearly, $\CutG{H}{Z}$ must involve some edges in $E'$.
However, due to the restrictions \Cref{lem:tightcutcolour} places on the colours of the endpoints of these edges, we would in fact need at least three edges from $E'$ to separate $\{ u, v \}$ and $\{ x, y \}$ in $H - \CutG{H}{Z}$.
Furthermore, due to our limited choices, these edges cannot be mutually disjoint, contradicting \Cref{lem:tightcutdisjoint}. 
Thus we can go on to assuming that $u$ and $v$, and respectively $x$ and $y$, lie on different shores with respect to $\CutG{H}{Z}$.

If $u$ and $x$ lie on the same shore, then $\CutG{H}{Z}$ must again involve at least two edges of $E'$ whilst also respecting the constraints provided by \Cref{lem:tightcutcolour}.
Again, this is not possible if we choose only two such edges from $E'$ and any three suitable edges are not mutually disjoint, again contradicting \Cref{lem:tightcutdisjoint}.

We move on to assuming that $u$ and $y$ lie on the same shore of the tight cut $\CutG{H}{Z}$.
Using the arguments above, especially those based on \Cref{lem:tightcutcolour}, we can deduce that $uu', yy' \in \CutG{H}{Z}$, leaving us with one edge $e \in \CutG{H}{Z} \cap E(G)$.
This implies $\CutG{H}{Z} = \{ uv, xy, e \}$ is tight in $G$, which directly contradicts \Cref{lem:c4exptight}.
\end{proof}

We observe that this theorem also solidifies the intuition that any tight cut in a graph in $\Barn$ was initially introduced by a cube-expansion. 
This idea together with \Cref{thm:tightcuttracking} suggests the following amendments to the generation procedure for $\Barn$.
\begin{itemize}
    \item Associate with each generated graph $G$ a family $\FFF_G$, which contains an inclusion-wise maximal family of non-trivial tight cuts in $G$.
    (As we will see, we do not have to check these properties.)
    
    \item If we use a cube-expansion on a vertex $v$ in a graph $G$ to generate $H$, we add the tight cut created by this operation into $\FFF_G$.
    Any edges of cuts in $\FFF_G$ incident to $v$, are then renamed appropriately.
    The result of this is $\FFF_H$.
    Note that the new tight cut is guaranteed to be laminar to the others, since we expanded a single vertex.
    
    \item Whenever we use a $C_4$-expansion in a graph $G$ at $uv$ and $xy$ to generate $H$, we again rename the edges in cuts of $\FFF_G$ which involve $uv$ or $xy$.
    Furthermore, we remove any cuts $\CutG{G}{X}$ from $\FFF_G$ with $|\{ u,v,x,y \} \cap X| \neq 2$.
    The result of this is $\FFF_H$, which is an inclusion-wise maximal family of non-trivial tight cuts in $G$ according to \Cref{thm:tightcuttracking}.
    Of course if $\FFF_G = \emptyset$, then $\FFF_H = \emptyset$.
    The family $\FFF_H$ can be constructed in linear time (see \Cref{lem:linear}).
\end{itemize}
If we generate the graphs in $\Barn$ in this way, we only have to check the Hamiltonicity of a graph $G \in \Barn$ if $\FFF_G = \emptyset$, which means that it is a brace.
This drives up the space-requirements of the procedure somewhat, but the time saved by not checking graphs with tight cuts for their Hamiltonicity should make up for this fact, at least for small numbers of vertices.
For some pairs of graphs, this also makes it easier to check whether they are isomorphic, since for two isomorphic graphs $G$ and $G'$, we must have $|\FFF_G| = |\FFF_{G'}|$.

In this context it seems pertinent to ask just how many tight cuts we might find and thus have to save in addition to the graph.
We give a sharp upper bound to this.

\begin{lemma}\label{lem:howmanytightcuts}
Let $G \in \Barn$, with $|V(G)| = n$ and let $\FFF$ be an inclusion-wise maximal family of non-trivial, pairwise laminar tight cuts, then $|\FFF| \leq \frac{n-8}{6}$.
This bound is sharp.
\end{lemma}
\begin{proof}
We note that all graphs in $\Barn$ have at least eight vertices and thus $0 \leq \frac{n-8}{6}$, which verifies the bound for all braces in $\Barn$.
Assume that $G$ is a smallest counterexample to the statement of the lemma and observe that $G$ is therefore not a brace.
We note that in any maximal family of non-trivial, pairwise laminar tight cuts, we can always find a cut $\Cut{X}$, such that $X \subseteq Y$, for all $\Cut{Y} \in \FFF$.
Thus $G_1 = \ContractXinGtoV{\overline{X}}{G}{c}$ is a brace, since any tight cut in $G_1$ would translate into a tight cut in $G$ missing from $\FFF$, which was assumed to be maximal.
Furthermore, $G_1 \in \Barn$ by \Cref{lem:tightcutpreserve} and $\FFF' = \FFF \setminus \{ \Cut{X} \}$ is an inclusion-wise maximal family of non-trivial, pairwise laminar tight cuts in $G_2 = \ContractXinGtoV{X}{G}{c}$, up to some renaming of edges.

We note that the smallest brace in $\Barn$ has eight vertices.
Hence, $|X| \geq 7$ and we have $|V(G_2)| \leq n - 6$, since we retained the contraction vertex $c$.
This can then be combined to yield
\[ |\FFF| = |\FFF'| + 1 \leq \frac{n-14}{6} + 1 = \frac{n-8}{6} , \]
which contradicts $G$ being a smallest counterexample, as this entails $|\FFF| > \frac{n-8}{6}$.

An example of a graph with a tight cut which meets this bound can be constructed by applying a single cube-expansion to the cube.
\end{proof}

If we save every tight cut as a set of three edges, we thus need additional space for each generated graph adding up to at most half the number of its vertices.
Let us now argue that we can compute $\FFF_H$ from $\FFF_G$ in linear time if we performed a $C_4$-expansion, under the assumption that $\FFF_G$ was saved in a way that lets us directly link the edges in the graphs with the edges in the cuts contained in $\FFF_G$.
This could for example be realised by labelling the edges of $G$.

\begin{lemma}\label{lem:linear}
Let $G \in \Barn$ and let $\FFF_G$ be an inclusion-wise maximal family of non-trivial, pairwise laminar tight cuts in $G$.
If $H$ is a $C_4$-expansion of $G$ at $uv$ and $xy$, then an inclusion-wise maximal family $\FFF_H$ of non-trivial, pairwise laminar tight cuts in $H$ can be constructed from $\FFF_G$ in linear time.
\end{lemma}
\begin{proof}
We first make some observations on the structure of the graph, which we can then use to simplify our computation.
Let $\CutG{G}{X} \in \FFF_G$ be such that $|\{ u,v,x,y \} \cap X| = 2$ and recall that according to \Cref{thm:tightcuttracking} these are the tight cuts we must identify and remove to construct $\FFF_H$.

We claim that $\CutG{G}{X}$ cannot separate $u$ and $v$.
Suppose that this is not true.
This requires that $uv \in \CutG{G}{X}$.
Now, independent of whether $u$ and $x$, or $u$ and $y$ lie on the same shore, we must also have $xy \in \CutG{G}{X}$, which contradicts \Cref{lem:c4exptight}.
Thus our claim holds and $\CutG{G}{X}$ must separate $u,v$ and $x,y$ in $G$.
In fact, as a consequence of this claim, we know that we must remove a cut from $\FFF_G$ if and only if it separates $u,v$ and $x,y$ in $G$.
Let us call such a cut \emph{removable}.
We further remark that for any removable cut $\CutG{G}{X}$, we therefore have $\CutG{G}{X} \cap \{ uv, xy \} = \emptyset$.

Next, let us consider a path $P \subseteq G$ with the endpoints $u$ and $x$, which must exist since $G$ is connected.
We now analyse how $P$ interacts with some arbitrary $\CutG{G}{Y} \in \FFF_G$.
W.l.o.g.\ we assume that $u \in Y$.
The graph $P - \CutG{G}{Y}$ is composed of components $P_0, \ldots , P_t$ that are paths, which we can name such that $u \in V(P_0)$, $x \in V(P_t)$, and the paths appear in increasing order when we traverse $P$ from $u$ to $x$.
It is now easy to see that $P_i \subseteq \InducedSubgraph{G}{Y}$, for an $i \in \{ 0, \ldots , t \}$, if and only if $i$ is even.
Therefore $\CutG{G}{Y}$ is removable if and only if $t$ is odd, which is equivalent to $|E(P) \cap \CutG{G}{Y}|$ being even.

Let us turn to constructing $\FFF_H$.
We assume that an edge $e \in E(G)$ has the label $\ell_Z$ if $e \in \Cut{Z} \in \FFF_G$.
A path $P$ with endpoints $u$ and $x$ can be found in linear time using a breadth first search.
We can then count the occurrences of all labels, noting the results separately for each label.
If any specific label occurs an odd number of times, we delete this label from all edges of the graph, which can again be done by traversing the graph once after we have determined which labels have to be deleted.
After performing the $C_4$-expansion and the requisite renaming of edges mentioned in \Cref{thm:tightcuttracking}, which can be performed in constant time, the labels which are left form $\FFF_H$.
If necessary $\FFF_H$ can be constructed explicitly by traversing $H$ once more, again adding a linear effort.
\end{proof}

While the above arguments take care of the generation of braces in $\Barn$, the procedure used in \cite{holton1984hamiltonian} to check the Hamiltonicity of the generated graphs involves checking the $H^{+-}$-property for small graphs in $\Barn$.
For larger graphs, we can then find certain configurations that allow us to reduce to the smaller graphs we have generated.
The $H^{+-}$-property can then be used to confirm that the larger graphs are Hamiltonian, though the actual $H^{+-}$-property cannot be preserved.
We illustrate this with the following result.

\begin{theorem}[Brinkmann et al. \cite{brinkmann2021minimality}]\label{thm:computational}
Let $G \in \Barn$, with $|V(G)| = n$, then
\begin{enumerate}
    \item $n \leq 90$ implies that $G$ is Hamiltonian,
    
    \item $n \leq 78$ implies that $G$ is $P_2$-Hamiltonian, and
    
    \item $n \leq 66$ implies that $G$ has the $H^{+-}$-property.
\end{enumerate}
\end{theorem}

The first and second item are implied by the third through the use of several results from \cite{holton1984hamiltonian}.
\Cref{lem:tightcutproperty} tells us that we can preserve the $H^{+-}$-property through tight cuts.
In particular, combining the above theorem and \Cref{lem:tightcutproperty} tells us that, if $G \in \Barn$ is a graph whose braces each have 66 vertices or less, then $G$ has the $H^{+-}$-property.
Thus our amendments to the generation procedure can be adapted in a straight-forward fashion to checking the $H^{+-}$-property.

Finally, we want to mention that using \Cref{lem:tightcutdisjoint}, our results can be rephrased such that any tight cut is replaced with a cut of order three consisting of mutually disjoint, pairwise non-adjacent edges, if we are only concerned with cubic, 3-connected, bipartite graphs.
This basically translates everything into the language of cyclically 4-connected graphs, in line with \Cref{cor:bracecyc4con}.
Whilst this might make some of the statements more intuitive for the reader, we are not sure if a straightforward translation of the proofs exist.

\section{Discussion}\label{sec:further}

We mentioned splices briefly in \Cref{sec:note} and said that reversed versions of \Cref{thm:tightcutham} let us construct new Hamiltonian graphs.
This can in fact be done with \Cref{lem:tightcutpreserve} and, though the lemma does not concern tight cuts, \Cref{lem:bracestosum} can be used similarly.
In particular, the $K_{3,3}$ and the Heawood graph are $P_3$- and $P_4$-Hamiltonian, and have the $H^-$- and the $H^{+-}$-property.
Additionally, \Cref{thm:computational} provides us with a large class of small cubic braces with the $H^-$- and the $H^{+-}$-property.
Thus we can generate a large class containing both planar and non-planar, cubic, bipartite graphs with strong Hamiltonicity properties by repeatedly using splices.

In \Cref{sec:pfaffian}, we both mention a reduction of \Cref{con:barnette} to braces and then provide what is essentially a relaxation of planarity for the conjecture.
With this reformulated conjecture, we can again attempt to relax or strengthen certain properties.
Let us start with planar, cubic braces, which are 3-connected by \Cref{thm:exttocon}.
Using Euler's formula, it is fairly easy to prove that a planar, bipartite graph cannot have minimum degree four.
Thus we cannot demand 4-connectivity.
We cannot drop planarity either, as the Georges-Kelmans graph $GK$, discovered by Georges \cite{georges1989non} and Kelmans \cite{kelmans1986konstruktsii}\cite{kelmans1988cubic} independently, is a brace, as the authors verified by hand.
Thus the only option left is to drop regularity, leading us to the following question.

\begin{question}
    Is every planar brace Hamiltonian?
\end{question}

Concerning non-Pfaffian graphs, it is fairly easy to find a conformal bisubdivision of the $K_{3,3}$ in $GK$, but we did not check whether all counterexamples to Tutte's conjecture are Pfaffian.
If any of them are, then this would disprove \Cref{con:barnette} via the sixth item of \Cref{thm:mainthmpfaffian}.
Whilst investigating non-planar braces, the following question comes naturally in the vein of the main subject of \cite{brinkmann2021minimality} and $GK$ already gives an upper bound of $50$.

\begin{question}
    What is the least number of vertices of a non-planar, non-Hamiltonian brace?
\end{question}

One of the key lemmas for proving \Cref{thm:mainthmpfaffian} was \Cref{lem:tightcutproperty}.
Missing from the items there are both $P_2$-Hamiltonicity and Hamiltonicity itself.
If we have a $P_2$-Hamiltonian, cubic, $3$-connected, bipartite graph, then it is easy to show, using the methods we presented, that all of its tight cut contractions will also be $P_2$-Hamiltonian and for Hamiltonicity this fact is stated in \Cref{thm:tightcutham}.
One might wonder if the other direction holds for either property.

This however can be disproven by considering the first graph presented by Horton \cite{bondy1976graph} as a counterexample to Tutte's conjecture.
First we note that this graph can be constructed by taking a $K_{3,3}$ and splicing a copy of the graph in \Cref{fig:hortonfrag} into each of the three vertices of one of its colour classes.
Each of these splices creates a tight cut.
The fact that the Horton graph can be constructed this way confirms that it is not Pfaffian according to \Cref{thm:tightcutpfaffian} and \Cref{thm:pfaffianlittle}, since the $K_{3,3}$ is one of its braces.
We have already noted that the $K_{3,3}$ has several strong Hamiltonicity properties.
Unsurprisingly, it is also $P_2$-Hamiltonian.
This is also true for the graph in \Cref{fig:hortonfrag}, which also happens to be non-Pfaffian.
Thus if two tight cut contractions of a given cubic, $3$-connected, bipartite graph $G$ are ($P_2$-)Hamiltonian, the graph $G$ itself may still be non-Hamiltonian.

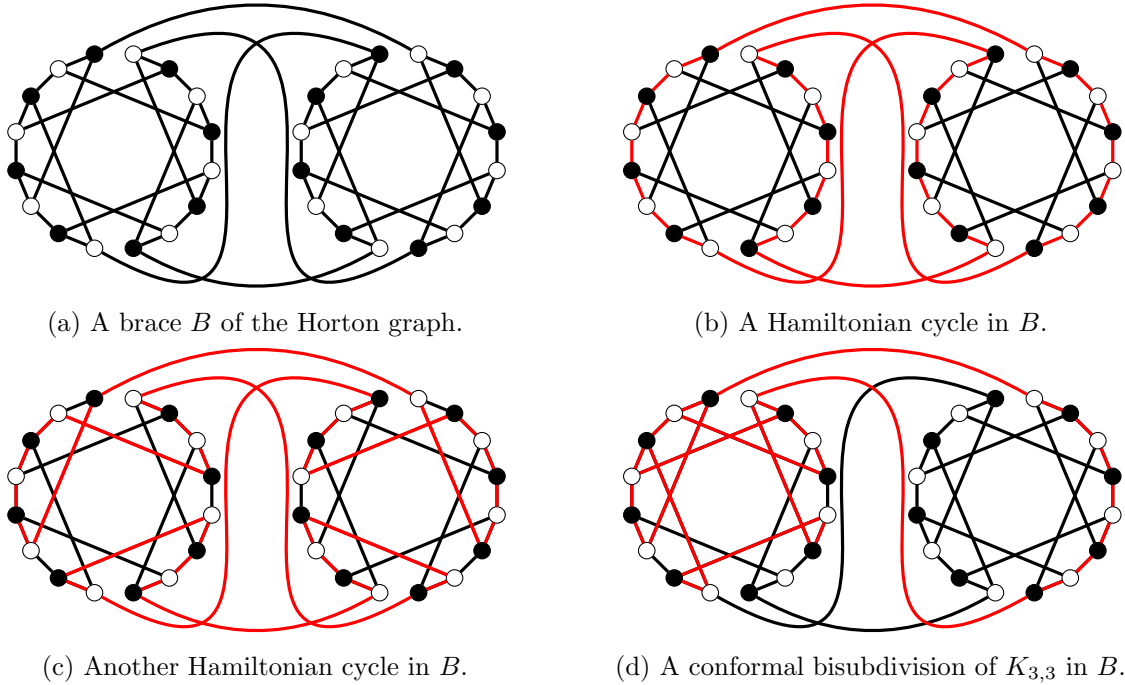
\begin{figure}[ht]
     \centering
     \begin{subfigure}[b]{0.45\textwidth}
         \centering
             		\begin{tikzpicture}[scale=0.75]
    		
    		\clip (-2,-2.5) rectangle (7,2.7);
			
			\node (V0) at (0:0) [draw=none] {};
			
			\foreach\i in {1,3,5,7,9,11,13,15}
			{
				\node (V\i) at ($(V0)+({(360/16 * \i)+(360/4)-(360/32)}:1.75)$) [draw, circle, scale=0.6, fill, label={}] {};
			}
			
			\foreach\i in {2,4,6,8,10,12,14,16}
			{
				\node (V\i) at ($(V0)+({(360/16 * \i)+(360/4)-(360/32)}:1.75)$) [draw, circle, scale=0.6, label={}] {};
			}
			
			\foreach\i in {1,2,3,4,5,6,7,9,10,11,12,13,14,15}
			{
				\pgfmathtruncatemacro\iplus{\i+1}
				\path (V\i) edge[very thick] (V\iplus);
			}
			%\path (V16) edge[very thick] (V1);
			
			\path
				(V1) edge[very thick] (V6)
				(V2) edge[very thick] (V13)
				(V3) edge[very thick] (V8)
				(V4) edge[very thick] (V15)
				(V5) edge[very thick] (V10)
				(V7) edge[very thick] (V12)
				(V9) edge[very thick] (V14)
				(V11) edge[very thick] (V16)
			;
			
			\node (U0) at (0:5) [draw=none] {};
			
			\foreach\i in {1,3,5,7,9,11,13,15}
			{
				\node (U\i) at ($(U0)+({(360/16 * \i)+(360/4)-(360/32)}:1.75)$) [draw, circle, scale=0.6, fill, label={}] {};
			}
			
			\foreach\i in {2,4,6,8,10,12,14,16}
			{
				\node (U\i) at ($(U0)+({(360/16 * \i)+(360/4)-(360/32)}:1.75)$) [draw, circle, scale=0.6, label={}] {};
			}
			
			\foreach\i in {1,2,3,4,5,6,7,9,10,11,12,13,14,15}
			{
				\pgfmathtruncatemacro\iplus{\i+1}
				\path (U\i) edge[very thick] (U\iplus);
			}
			%\path (U16) edge[very thick] (U1);
			
			\path
				(U1) edge[very thick] (U6)
				(U2) edge[very thick] (U13)
				(U3) edge[very thick] (U8)
				(U4) edge[very thick] (U15)
				(U5) edge[very thick] (U10)
				(U7) edge[very thick] (U12)
				(U9) edge[very thick] (U14)
				(U11) edge[very thick] (U16)
			;
			
			\path
				(V9) edge[very thick, bend right] (U8)
				(V1) edge[very thick, bend left] (U16)
			;
			
			\node (C1) at ($(U0)+({(360/16 * 3)+(360/4)-(360/32)}:6.75)$) [draw=none, label={}] {};
			\node (C2) at ($(U0)+({(360/16 * 6)+(360/4)}:6.25)$) [draw=none, label={}] {};
			\node (C3) at ($(V0)+({(360/16 * 11)+(360/4)-(360/16)}:6.25)$) [draw=none, label={}] {};
			\node (C4) at ($(V0)+({(360/16 * 14)+(360/4)-(360/32)}:6.75)$) [draw=none, label={}] {};
			
			\draw[very thick] (V8) .. controls (C3) and (C1) .. (U1);
			\draw[very thick] (V16) .. controls (C4) and (C2) .. (U9);
			
			\end{tikzpicture}
         \caption{A brace $B$ of the Horton graph.}
         \label{fig:hortonfrag}
     \end{subfigure}
     \qquad
     \begin{subfigure}[b]{0.45\textwidth}
         \centering
             		\begin{tikzpicture}[scale=0.75]
    		
    		\clip (-2,-2.5) rectangle (7,2.7);
			
			\node (V0) at (0:0) [draw=none] {};
			
			\foreach\i in {1,3,5,7,9,11,13,15}
			{
				\node (V\i) at ($(V0)+({(360/16 * \i)+(360/4)-(360/32)}:1.75)$) [draw, circle, scale=0.6, fill, label={}] {};
			}
			
			\foreach\i in {2,4,6,8,10,12,14,16}
			{
				\node (V\i) at ($(V0)+({(360/16 * \i)+(360/4)-(360/32)}:1.75)$) [draw, circle, scale=0.6, label={}] {};
			}
			
			\foreach\i in {1,2,3,4,5,6,7,9,10,11,12,13,14,15}
			{
				\pgfmathtruncatemacro\iplus{\i+1}
				\path (V\i) edge[red, very thick] (V\iplus);
			}
			%\path (V16) edge[very thick] (V1);
			
			\path
				(V1) edge[very thick] (V6)
				(V2) edge[very thick] (V13)
				(V3) edge[very thick] (V8)
				(V4) edge[very thick] (V15)
				(V5) edge[very thick] (V10)
				(V7) edge[very thick] (V12)
				(V9) edge[very thick] (V14)
				(V11) edge[very thick] (V16)
			;
			
			\node (U0) at (0:5) [draw=none] {};
			
			\foreach\i in {1,3,5,7,9,11,13,15}
			{
				\node (U\i) at ($(U0)+({(360/16 * \i)+(360/4)-(360/32)}:1.75)$) [draw, circle, scale=0.6, fill, label={}] {};
			}
			
			\foreach\i in {2,4,6,8,10,12,14,16}
			{
				\node (U\i) at ($(U0)+({(360/16 * \i)+(360/4)-(360/32)}:1.75)$) [draw, circle, scale=0.6, label={}] {};
			}
			
			\foreach\i in {1,2,3,4,5,6,7,9,10,11,12,13,14,15}
			{
				\pgfmathtruncatemacro\iplus{\i+1}
				\path (U\i) edge[red, very thick] (U\iplus);
			}
			%\path (U16) edge[very thick] (U1);
			
			\path
				(U1) edge[very thick] (U6)
				(U2) edge[very thick] (U13)
				(U3) edge[very thick] (U8)
				(U4) edge[very thick] (U15)
				(U5) edge[very thick] (U10)
				(U7) edge[very thick] (U12)
				(U9) edge[very thick] (U14)
				(U11) edge[very thick] (U16)
			;
			
			\path
				(V9) edge[red, very thick, bend right] (U8)
				(V1) edge[red, very thick, bend left] (U16)
			;
			
			\node (C1) at ($(U0)+({(360/16 * 3)+(360/4)-(360/32)}:6.75)$) [draw=none, label={}] {};
			\node (C2) at ($(U0)+({(360/16 * 6)+(360/4)}:6.25)$) [draw=none, label={}] {};
			\node (C3) at ($(V0)+({(360/16 * 11)+(360/4)-(360/16)}:6.25)$) [draw=none, label={}] {};
			\node (C4) at ($(V0)+({(360/16 * 14)+(360/4)-(360/32)}:6.75)$) [draw=none, label={}] {};
			
			\draw[red, very thick] (V8) .. controls (C3) and (C1) .. (U1);
			\draw[red, very thick] (V16) .. controls (C4) and (C2) .. (U9);
			
			\end{tikzpicture}
         \caption{A Hamiltonian cycle in $B$.}
         \label{fig:hortonham1}
     \end{subfigure} \\
     \begin{subfigure}[b]{0.45\textwidth}
         \centering
             		\begin{tikzpicture}[scale=0.75]
    		
    		\clip (-2,-2.5) rectangle (7,2.7);
			
			\node (V0) at (0:0) [draw=none] {};
			
			\foreach\i in {1,3,5,7,9,11,13,15}
			{
				\node (V\i) at ($(V0)+({(360/16 * \i)+(360/4)-(360/32)}:1.75)$) [draw, circle, scale=0.6, fill, label={}] {};
			}
			
			\foreach\i in {2,4,6,8,10,12,14,16}
			{
				\node (V\i) at ($(V0)+({(360/16 * \i)+(360/4)-(360/32)}:1.75)$) [draw, circle, scale=0.6, label={}] {};
			}
			
			\foreach\i in {1,2,3,4,5,6,7,9,10,11,12,13,14,15}
			{
				\pgfmathtruncatemacro\iplus{\i+1}
				\path (V\i) edge[very thick] (V\iplus);
			}
			\foreach\i in {2,3,4,5,7,9,10,11,13,14,15}
			{
				\pgfmathtruncatemacro\iplus{\i+1}
				\path (V\i) edge[red, very thick] (V\iplus);
			}
			%\path (V16) edge[very thick] (V1);
			
			\path
				(V1) edge[red, very thick] (V6)
				(V2) edge[red, very thick] (V13)
				(V3) edge[very thick] (V8)
				(V4) edge[very thick] (V15)
				(V5) edge[very thick] (V10)
				(V7) edge[red, very thick] (V12)
				(V9) edge[very thick] (V14)
				(V11) edge[very thick] (V16)
			;
			\path
				(V1) edge[red, very thick] (V6)
				(V2) edge[red, very thick] (V13)
				(V7) edge[red, very thick] (V12)
			;
			
			\node (U0) at (0:5) [draw=none] {};
			
			\foreach\i in {1,3,5,7,9,11,13,15}
			{
				\node (U\i) at ($(U0)+({(360/16 * \i)+(360/4)-(360/32)}:1.75)$) [draw, circle, scale=0.6, fill, label={}] {};
			}
			
			\foreach\i in {2,4,6,8,10,12,14,16}
			{
				\node (U\i) at ($(U0)+({(360/16 * \i)+(360/4)-(360/32)}:1.75)$) [draw, circle, scale=0.6, label={}] {};
			}
			
			\foreach\i in {1,2,3,4,5,6,7,9,10,11,12,13,14,15}
			{
				\pgfmathtruncatemacro\iplus{\i+1}
				\path (U\i) edge[very thick] (U\iplus);
			}
			\foreach\i in {1,2,3,5,6,7,9,11,12,13,14}
			{
				\pgfmathtruncatemacro\iplus{\i+1}
				\path (U\i) edge[red, very thick] (U\iplus);
			}
			%\path (U16) edge[very thick] (U1);
			
			\path
				(U1) edge[very thick] (U6)
				(U2) edge[very thick] (U13)
				(U3) edge[very thick] (U8)
				(U4) edge[red, very thick] (U15)
				(U5) edge[red, very thick] (U10)
				(U7) edge[very thick] (U12)
				(U9) edge[very thick] (U14)
				(U11) edge[red, very thick] (U16)
			;
			\path
				(U4) edge[red, very thick] (U15)
				(U5) edge[red, very thick] (U10)
				(U11) edge[red, very thick] (U16)
			;
			
			\path
				(V9) edge[red, very thick, bend right] (U8)
				(V1) edge[red, very thick, bend left] (U16)
			;
			
			\node (C1) at ($(U0)+({(360/16 * 3)+(360/4)-(360/32)}:6.75)$) [draw=none, label={}] {};
			\node (C2) at ($(U0)+({(360/16 * 6)+(360/4)}:6.25)$) [draw=none, label={}] {};
			\node (C3) at ($(V0)+({(360/16 * 11)+(360/4)-(360/16)}:6.25)$) [draw=none, label={}] {};
			\node (C4) at ($(V0)+({(360/16 * 14)+(360/4)-(360/32)}:6.75)$) [draw=none, label={}] {};
			
			\draw[red, very thick] (V8) .. controls (C3) and (C1) .. (U1);
			\draw[red, very thick] (V16) .. controls (C4) and (C2) .. (U9);
			
			\end{tikzpicture}
         \caption{Another Hamiltonian cycle in $B$.}
         \label{fig:hortonham2}
     \end{subfigure}
     \qquad
     \begin{subfigure}[b]{0.45\textwidth}
         \centering
             		\begin{tikzpicture}[scale=0.75]
    		
    		\clip (-2,-2.5) rectangle (7,2.7);
			
			\node (V0) at (0:0) [draw=none] {};
			
			\foreach\i in {1,3,5,7,9,11,13,15}
			{
				\node (V\i) at ($(V0)+({(360/16 * \i)+(360/4)-(360/32)}:1.75)$) [draw, circle, scale=0.6, fill, label={}] {};
			}
			
			\foreach\i in {2,4,6,8,10,12,14,16}
			{
				\node (V\i) at ($(V0)+({(360/16 * \i)+(360/4)-(360/32)}:1.75)$) [draw, circle, scale=0.6, label={}] {};
			}
			
			\foreach\i in {1,2,3,4,5,6,7,9,10,11,12,13,14,15}
			{
				\pgfmathtruncatemacro\iplus{\i+1}
				\path (V\i) edge[very thick] (V\iplus);
			}
			\foreach\i in {1,2,3,4,5,7,11,13,14,15}
			{
				\pgfmathtruncatemacro\iplus{\i+1}
				\path (V\i) edge[red, very thick] (V\iplus);
			}
			%\path (V16) edge[very thick] (V1);
			
			\path
				(V1) edge[very thick] (V6)
				(V2) edge[very thick] (V13)
				(V3) edge[very thick] (V8)
				(V4) edge[very thick] (V15)
				(V5) edge[very thick] (V10)
				(V7) edge[very thick] (V12)
				(V9) edge[very thick] (V14)
				(V11) edge[very thick] (V16)
			;
			\path
				(V1) edge[red, very thick] (V6)
				(V2) edge[red, very thick] (V13)
				(V3) edge[red, very thick] (V8)
				(V4) edge[red, very thick] (V15)
				(V7) edge[red, very thick] (V12)
				(V11) edge[red, very thick] (V16)
			;
			
			\node (U0) at (0:5) [draw=none] {};
			
			\foreach\i in {1,3,5,7,9,11,13,15}
			{
				\node (U\i) at ($(U0)+({(360/16 * \i)+(360/4)-(360/32)}:1.75)$) [draw, circle, scale=0.6, fill, label={}] {};
			}
			
			\foreach\i in {2,4,6,8,10,12,14,16}
			{
				\node (U\i) at ($(U0)+({(360/16 * \i)+(360/4)-(360/32)}:1.75)$) [draw, circle, scale=0.6, label={}] {};
			}
			
			\foreach\i in {1,2,3,4,5,6,7,9,10,11,12,13,14,15}
			{
				\pgfmathtruncatemacro\iplus{\i+1}
				\path (U\i) edge[very thick] (U\iplus);
			}
			\foreach\i in {9,10,11,12,13,14,15}
			{
				\pgfmathtruncatemacro\iplus{\i+1}
				\path (U\i) edge[red, very thick] (U\iplus);
			}
			%\path (U16) edge[very thick] (U1);
			
			\path
				(U1) edge[very thick] (U6)
				(U2) edge[very thick] (U13)
				(U3) edge[very thick] (U8)
				(U4) edge[very thick] (U15)
				(U5) edge[very thick] (U10)
				(U7) edge[very thick] (U12)
				(U9) edge[very thick] (U14)
				(U11) edge[very thick] (U16)
			;
			
			\path
				(V9) edge[very thick, bend right] (U8)
				(V1) edge[red, very thick, bend left] (U16)
			;
			
			\node (C1) at ($(U0)+({(360/16 * 3)+(360/4)-(360/32)}:6.75)$) [draw=none, label={}] {};
			\node (C2) at ($(U0)+({(360/16 * 6)+(360/4)}:6.25)$) [draw=none, label={}] {};
			\node (C3) at ($(V0)+({(360/16 * 11)+(360/4)-(360/16)}:6.25)$) [draw=none, label={}] {};
			\node (C4) at ($(V0)+({(360/16 * 14)+(360/4)-(360/32)}:6.75)$) [draw=none, label={}] {};
			
			\draw[very thick] (V8) .. controls (C3) and (C1) .. (U1);
			\draw[red, very thick] (V16) .. controls (C4) and (C2) .. (U9);
			
			\end{tikzpicture}
         \caption{A conformal bisubdivision of $K_{3,3}$ in $B$.}
         \label{fig:hortonk33}
     \end{subfigure}
     \caption{The two Hamiltonian cycles given for $B$ should help to confirm that $B$ is $P_2$-Hamiltonian.
     Due to the symmetry in $B$, the cycle in \Cref{fig:hortonham2} can be shifted such that all edges not covered by the cycle in \Cref{fig:hortonham1} can be found in a Hamiltonian cycle of this type in $B$.
     The bisubdivision of $K_{3,3}$ in \Cref{fig:hortonk33} confirms that $B$ is not Pfaffian.}
\end{figure}

In contrast to this, we are unsure whether the first point of \Cref{lem:tightcutproperty} can be reversed.
This is somewhat surprising, since for the other Hamiltonicity properties we considered, this direction was actually the easier one to prove.

\begin{question}
    Let $G$ be a $P_4$-Hamiltonian, $3$-connected, cubic, bipartite graph, with a non-trivial tight cut $\Cut{X}$, and let $G_1$ and $G_2$ be the two tight cut contractions belonging to $\Cut{X}$.
    Are $G_1$ and $G_2$ also $P_4$-Hamiltonian?
\end{question}

Regarding our suggestions for the generation procedure for $\Barn$, one might ask why we cannot just check each generated graph and see whether it is a brace.
This is indeed possible.
A polynomial time algorithm for determining the extendability of a bipartite graph was first given by Lakhal and Litzler \cite{lakhal1998polynomial} and an improvement to a runtime in $\mathcal{O}(mn)$, which is in $\mathcal{O}(n^2)$ for cubic graphs, was then provided by Zhang and Zhang \cite{zhang2006construction}.
Of course, we could also just check whether the generated graph is cyclically 4-connected, but the known algorithms (see \cite{dvovrak2004algorithm} and \cite{lu2009efficient}) do not improve on a quadratic runtime.
Either way, unless space is scarce and we have much more processor time to burn, it seems like checking whether a generated graph is a brace in this way would not be advisable, though it might still improve on simply generating all graphs and checking Hamiltonicity indiscriminately.

\textbf{Acknowledgements.} We want to thank Tom Fowler for providing us with an unpublished manuscript \cite{fowler2000reducible}, in which he tackles the task of finding more reductions for Barnette's Conjecture.
This ended up informing the discussion on known reductions in \Cref{sec:generation}.

Additionally, we want to thank an anonymous reviewer for spotting an error in the original bound of \Cref{lem:howmanytightcuts}, pointing us to the Horton graph as an example of a non-Hamiltonian graph with ($P_2$-)Hamiltonian braces, and providing several smaller remarks which improved the paper.

% \section{Narrowing Barnette's Conjecture}

% \input{equivalentconjectures}

\bibliographystyle{alphaurl}
\bibliography{literature}

\end{document}